\documentclass[11pt]{article}

\usepackage[letterpaper,margin=1in]{geometry}
\usepackage{amsmath,amssymb,mathtools}
\usepackage{amsthm}
\usepackage{newtxtext,newtxmath}
\usepackage{bm}
\usepackage{graphicx}
\usepackage{subfig}
\usepackage{algorithm}
\usepackage{algpseudocode}
\usepackage{textcomp}
\usepackage{authblk}
\usepackage{natbib}
\bibpunct[, ]{(}{)}{,}{a}{}{,}

\usepackage[colorlinks=true, linkcolor=blue, citecolor=blue, urlcolor=blue]{hyperref}
\usepackage{microtype}

\newtheorem{theorem}{Theorem}
\newtheorem{lemma}{Lemma}
\newtheorem{corollary}{Corollary}
\theoremstyle{remark}
\newtheorem{remark}{Remark}

\title{Warm-IP: A Path-Following ADMM Warm Start for\\ Interior-Point Quadratic Programming}

\author[a]{Babak Aslani}
\author[b]{Mojtaba Tefagh\thanks{Corresponding author: \texttt{m.tefagh@ed.ac.uk}}}
\author[a]{Gourav Jhanwar}
\author[a]{Masoud Zarepisheh}
\affil[a]{Department of Medical Physics, Memorial Sloan Kettering Cancer Center, New York, USA}
\affil[b]{School of Informatics, The University of Edinburgh, Edinburgh, Scotland}
\date{}

\begin{document}

\maketitle

\begin{abstract}
\noindent
Interior-point (IP) methods are powerful second-order optimization algorithms but can make slow progress during early iterations. We propose Warm-IP, a hybrid framework that combines a path-following variant of the alternating direction method of multipliers (ADMM) with a primal--dual IP method. The ADMM phase generates a warm start that remains close to the central path, enabling the subsequent IP phase to rapidly refine the solution to high accuracy. We evaluate Warm-IP on a diverse collection of real-world and synthetic quadratic programming (QP) instances and compare it with leading commercial solvers (MOSEK and Gurobi) and open-source solvers (Clarabel, PIQP, and OSQP). We also curate and release a new benchmark of large-scale QPs from cancer radiotherapy treatment planning, built from real patient data and provided in canonical QP form, with instances containing hundreds of thousands of constraints. Warm-IP substantially outperforms existing solvers on large-scale problems. On the large-scale radiotherapy benchmark comprising 60 lung cancer cases, for example, Warm-IP solves an instance in 47 seconds on average with no failures, while the strongest competitors, MOSEK and Gurobi, average 55 seconds with failures on 23\% of the cases and roughly 240 seconds with no failures, respectively. To support reproducibility and future benchmarking, we publicly release the implementation and benchmark data through a \href{https://github.com/Radiotherapy-Optimization/Warm-IP}{GitHub repository}. These results demonstrate that appropriately warm-started IP methods can combine the rapid initial progress of first-order methods with the high-accuracy convergence of second-order methods.
\end{abstract}

\medskip
\noindent\textbf{Keywords:} Quadratic Programming; Interior-Point Methods; Primal--Dual Methods; Large-Scale Optimization

\section{Introduction}\label{sec:Intro}
	
	Quadratic programming (QP) is a foundational class of optimization problems that arises across a wide range of applications in science, engineering, and operations research. In finance, the Markowitz mean--variance model \citep{markowitz1952portfolio} remains a cornerstone of modern portfolio optimization and is often solved as a QP  \citep{cesarone2023mean,zheng2021quadratic}. In statistical learning, several important models, including the Lasso \citep{tibshirani1996regression} and support vector machines \citep{cortes1995support}, admit QP formulations. In healthcare, large-scale QPs arise naturally in radiotherapy treatment planning, where optimization is used to balance tumor coverage against sparing of healthy organs \citep{zarepisheh2022automated,adibi2022scalable}.
	
	
	\subsection{QP Solution Methods}
	Several solution methods have been developed for QPs, which can generally be categorized into two main classes: first-order and second-order methods.
	
	\textbf{First-order methods}, such as gradient descent \citep{tseng2010coordinate} and the alternating direction method of multipliers (ADMM) \citep{boyd2011distributed}, rely primarily on gradient information. Their iterations involve simple matrix--vector operations with low per-iteration cost and modest memory requirements, which makes them highly scalable for large QP instances; however, their linear or sublinear convergence makes them inefficient when high-precision solutions are needed.
	
	\textbf{Second-order methods}, including interior-point (IP) methods \citep{aps2022mosek,schwan2023piqp,goulart2024clarabel}, quasi-Newton methods \citep{gondzio2019quasi}, and sequential quadratic programming (SQP) methods \citep{jian2022sequential}, exploit curvature information in addition to gradients. Among these, IP methods are particularly attractive for QP: they are robust and reach high accuracy in relatively few iterations, but each iteration requires solving a large Newton (KKT) linear system, and progress is often modest before the fast local convergence regime is reached, which limits scalability on very large instances.
	
	\textbf{Warm-starting IP methods.} A natural strategy for combining the two classes is to warm-start a second-order method from an inexpensive first-order solution. However, warm-starting IP methods is well known to be difficult: an initial point that is close to optimality but poorly centered, typically lying near the boundary of the feasible region, forces the method to take severely damped Newton steps, so a warm-started IP method may perform no better than a cold-started one \citep{yildirim2002warm,gondzio1998warm}. Proposed remedies include carefully modified initial points and controlled perturbations \citep{yildirim2002warm}, exact primal--dual penalty reformulations \citep{benson2007exact}, auxiliary slack variables and relaxations \citep{engau2010interior}, and warm-starting the homogeneous self-dual embedding \citep{skajaa2013warmstarting}. These approaches modify the IP method so that it can tolerate an off-central-path starting point. Warm-IP takes the complementary route: it modifies the first-order phase itself so that its output is centered by construction.

	
	\subsection{Warm-IP: Motivation and Main Idea}
	Figure~\ref{conceptual} illustrates the main motivation behind our approach. First-order methods such as ADMM reduce the optimality gap rapidly during early iterations but slow down when high accuracy is required, while IP methods behave in the opposite way. A natural hybrid strategy is therefore to run ADMM to obtain an approximate solution and then switch to an IP method for high-accuracy refinement. However, a naive ADMM warm start may fail to accelerate the IP method: ADMM iterates are not designed to lie near the central path, so the IP method may spend substantial time re-centering before benefiting from its fast local convergence. Warm-IP instead employs a path-following ADMM whose iterates remain aligned with the central path by construction, producing a warm start that is both close to optimality and compatible with the geometry of IP iterates.
	\begin{figure}[htb]
		\centering
		\subfloat[Comparing ADMM and IP]{%
			\includegraphics[width=0.29\textwidth]{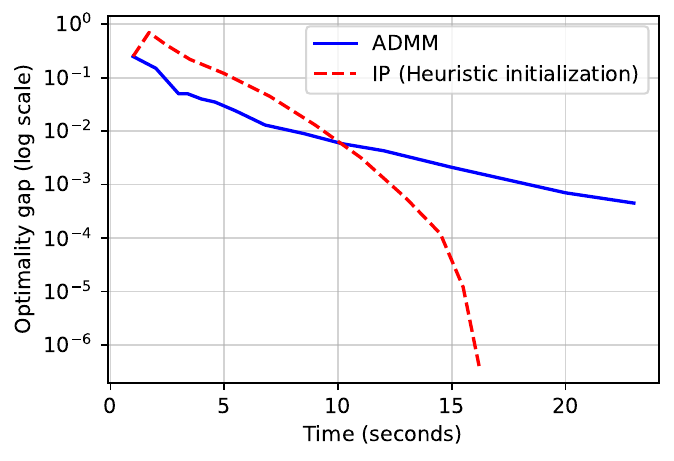}
		}\hfill
		\subfloat[Warm-start IP with ADMM and path-following ADMM]{%
			\includegraphics[width=0.29\textwidth]{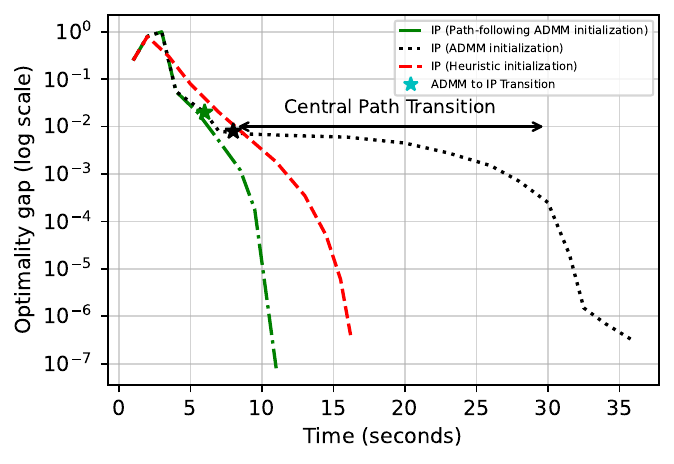}
		}\hfill
		\subfloat[Central Path]{%
			\includegraphics[width=0.27\textwidth]{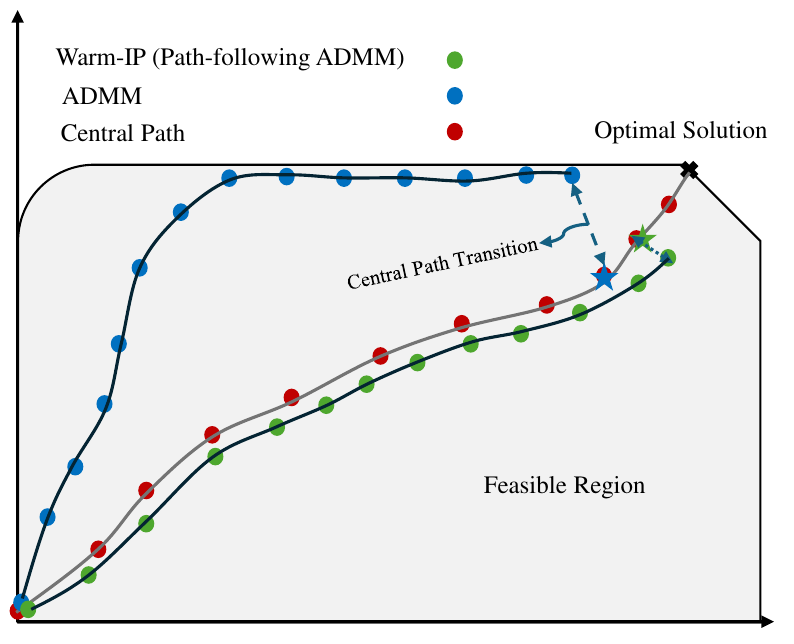}
		}

		\caption{\footnotesize Motivation for Warm-IP: (a) complementary convergence behavior of ADMM and IP ($\mathrm{KKT}_{\max}$, the maximum normalized KKT residual defined in Section~\ref{subsec:experimental_settings}, versus wall-clock time); (b) a naive ADMM warm start fails to accelerate IP, whereas a path-following ADMM warm start does; (c) standard ADMM iterates stray from the central path, while path-following ADMM iterates remain aligned with it.
		} 
		\label{conceptual}
	\end{figure}
	\subsection{Benchmark Dataset}
	\label{subsec:benchmark_dataset}
	
	To evaluate the proposed Warm-IP framework, we assembled a diverse benchmark suite spanning a broad range of problem sizes, structures, and application domains. Figure~\ref{data_size} summarizes the distribution of the QP instances in terms of the number of decision variables $n$ and constraints $m$, both shown on logarithmic scales. The benchmark suite includes established QP test sets, namely the Maros--Mészáros (MM) collection \citep{maros1999repository} and model predictive control (MPC) instances \citep{caron2024qpbenchmark}, as well as synthetically generated QPs and a new set of radiotherapy treatment planning QPs developed by our group.
	
	The MM collection is a standard test set with diverse numerical and structural properties, and the MPC instances arise from model predictive control applications; both were obtained from the public \href{https://github.com/qpsolvers/qpbenchmark}{QPBenchmark GitHub repository} \citep{caron2024qpbenchmark} and primarily cover the small- and medium-scale regimes.
	
	To complement these established test sets, we include synthetically generated QPs that provide broader coverage across problem dimensions; details of the generation procedure are provided in Section~\ref{sec:synthetic_benchmark}. We also curate a new benchmark set of large-scale QPs from cancer radiotherapy treatment planning, described in Section~\ref{sec:radiotherapy_benchmark}. These instances are derived from real patient treatment planning cases and are concentrated in the large-scale regime, where both the number of decision variables $n$ and the number of constraints $m$ are large.
	\begin{figure}[htbp]
		\centering
		\includegraphics{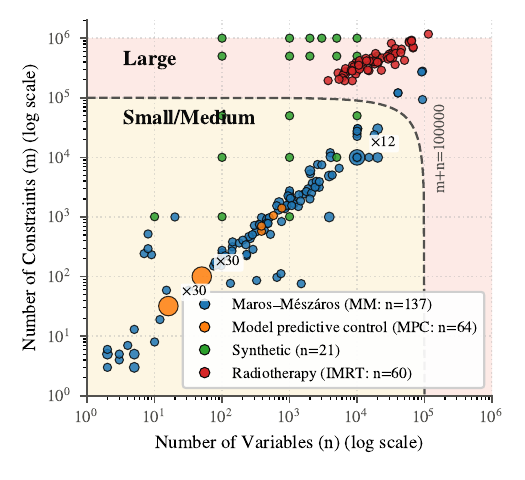}
		\caption{\footnotesize Benchmark instances by number of variables $n$ and constraints $m$. Markers merge instances with identical dimensions (marker area grows with the count; clusters of five or more are labeled); the dashed curve marks the threshold $m+n=100{,}000$ separating the small/medium and large regimes.}
		\label{data_size}
	\end{figure}
	The main contributions of this paper are summarized as follows.
	
	\begin{itemize}
		\item \textbf{A path-following ADMM.} We develop an ADMM variant for a log-barrier-modified QP and prove that, with the update order $x\rightarrow s\rightarrow z$, its iterates satisfy the central-path relation $SZe=\mu e$ exactly and remain strictly interior; producing well-centered warm starts by construction, rather than modifying the IP method to tolerate poorly centered ones.
		
		\item \textbf{The Warm-IP framework.} We integrate the path-following ADMM with a Mehrotra predictor--corrector IP method into a two-phase solver whose barrier parameter, stopping criteria, and linear-algebra structure are aligned across the phases.
		
		\item \textbf{A new large-scale benchmark.} We curate and publicly release 60 large-scale QP instances from lung-cancer radiotherapy treatment planning, built from real patient data with our open-source PortPy platform \citep{jhanwar2023portpy} and provided in canonical QP form so they can be used without radiotherapy knowledge, together with a complementary synthetic QP suite.
		
		\item \textbf{A comprehensive computational study.} We evaluate Warm-IP against two commercial and three open-source solvers across four benchmark classes; Warm-IP achieves the best performance on large-scale problems.
	\end{itemize}
	
	The remainder of the paper is organized as follows. Section~\ref{sec:methodology} presents the Warm-IP framework and its components. Section~\ref{numerical} reports the numerical experiments on the benchmark suite. Section~\ref{conclusion} provides concluding remarks and directions for future research.
	
	
	\section{Methodology}\label{sec:methodology}
	
	We consider a convex quadratic program and its Lagrange dual in the following form:
	\begin{equation}
		\label{eq:org_prob}
		\begin{aligned}
			\min_{x \in \mathbb{R}^n} \quad 
			& \frac{1}{2}x^\top Qx + q^\top x \\
			\text{s.t.} \quad 
			& Gx \le h, \\
			& Ax = b, \\
			& \textit{(primal)}
		\end{aligned}
		\qquad\qquad
		\begin{aligned}
			\max_{x,z,v} \quad 
			& -\frac{1}{2}x^\top Qx - h^\top z - b^\top v \\
			\text{s.t.} \quad 
			& Qx + G^\top z + A^\top v = -q, \\
			& z \ge 0, \\
			& \textit{(dual)}
		\end{aligned}
	\end{equation}
	Here, $Q\in\mathbb{S}_+^n$ is a symmetric positive semidefinite matrix, 
	$q\in\mathbb{R}^n$, $G\in\mathbb{R}^{m\times n}$, $h\in\mathbb{R}^m$, 
	$A\in\mathbb{R}^{p\times n}$, and $b\in\mathbb{R}^p$. The vector 
	$x\in\mathbb{R}^n$ denotes the primal decision variable, while 
	$z\in\mathbb{R}^m$ and $v\in\mathbb{R}^p$ are the Lagrange multipliers 
	associated with the inequality and equality constraints, respectively. The variable 
	$x$ appears in the dual formulation through the stationarity condition of the 
	Lagrangian. This primal--dual pair is standard; a detailed derivation of the dual 
	can be found in, e.g., \citet{boyd2004convex} and \citet{wright1997primal}.
	
	Throughout the paper, we make the following standing assumptions: (i) $A$ has full row rank; (ii) $Q$ is positive definite on the null space of $A$, or, alternatively, $\begin{bmatrix} G^\top & A^\top \end{bmatrix}^\top$ has full column rank; and (iii) both the primal and dual problems in \eqref{eq:org_prob} admit strictly feasible points, so that the central path is well defined \citep{boyd2004convex}. Assumptions (i)--(ii) guarantee that the KKT systems arising in the ADMM and IP phases, namely \eqref{eq:admm_xv_update} and \eqref{eq:reduced_kkt_remark}, are nonsingular.

	Introducing the slack variable $s$ for the inequality constraint $Gx \le h$, the Karush--Kuhn--Tucker (KKT) residual mapping is defined as:
	\begin{equation}
		\begin{aligned}
			F(x,v,z,s)
			&:=
			\begin{bmatrix}
				Qx + G^\top z + A^\top v + q \\
				Gx + s - h \\
				Ax - b \\
				SZe
			\end{bmatrix}
			&&
			\begin{array}{l}
				\text{(dual feasibility / stationarity)}\\
				\text{(primal feasibility: inequality constraints)}\\
				\text{(primal feasibility: equality constraints)}\\
				\text{(complementarity condition)}
			\end{array}
			\\[1ex]
		\end{aligned}
		\label{eq:KKT_residual}
	\end{equation}
	where $S=\operatorname{diag}(s)$, $Z=\operatorname{diag}(z)$, and $e$ is the vector of ones. The KKT optimality conditions are therefore given compactly by:
	\begin{equation}
		F(x,v,z,s)=0,
		\qquad s \ge 0,\; z \ge 0.
		\label{eq:KKT_compact}
	\end{equation}
	\subsection{Primal--Dual Interior-Point Algorithm}
	\label{sec:pdip}
	
	The primal--dual interior-point method is a well-established approach for solving convex optimization problems, and comprehensive derivations are available in standard references \citep{boyd2004convex,wright1997primal}. Here, we summarize only the essential algorithmic components used in our implementation. This review also provides the technical foundation for explaining how the path-following ADMM phase generates an effective warm start for the primal--dual interior-point method, referred to as IP in the remainder of the paper.
	
	The IP algorithm applies Newton's method to a perturbed form of the KKT system in \eqref{eq:KKT_compact}. Specifically, the complementarity condition $SZe=0$ is replaced by the relaxed condition $SZe = \mu e$, where $\mu>0$ is the barrier parameter and $e$ denotes the vector of ones. Starting from strictly positive slack and dual variables, $s>0$ and $z>0$, IP maintains positivity throughout the iterations while gradually reducing $\mu$ toward zero. In this way, the iterates follow the central path and converge to a solution satisfying the original KKT optimality conditions.

	At each iteration, the IP algorithm applies Newton's method to the perturbed KKT residual equations. Let $\omega := (x,v,z,s)$
	denote the collection of primal and dual variables. The Newton update has the following form:
	\begin{equation}
		\omega^{(k+1)} = \omega^{(k)} + \alpha^{(k)}\Delta \omega^{(k)},
		\label{eq:newton_update}
	\end{equation}
	where the Newton direction $\Delta \omega^{(k)}$ is obtained by solving
	\begin{equation}
		J\left(\omega^{(k)}\right)\Delta \omega^{(k)}
		=
		-F\left(\omega^{(k)}\right).
		\label{eq:newton_direction_general}
	\end{equation}
	Here, $F$ denotes the perturbed KKT residual mapping, $J(\omega)$ is its Jacobian with respect to the variables $(x,v,z,s)$, and $\alpha^{(k)}\in(0,1]$ is a step size selected by the fraction-to-the-boundary rule described in Appendix~\ref{app:step_size}.
	
	For the QP problem considered in this paper, the perturbed KKT residuals are
	\begin{equation}
		F(x,v,z,s)
		=
		\begin{bmatrix}
			r_{\mathrm d} \\
			r_{\mathrm p}^{\mathrm{ineq}} \\
			r_{\mathrm p}^{\mathrm{eq}} \\
			r_{\mathrm c}
		\end{bmatrix}
		=
		\begin{bmatrix}
			Qx + A^\top v + G^\top z + q \\
			Gx + s - h \\
			Ax - b \\
			SZe - \mu e
		\end{bmatrix},
		\label{eq:perturbed_kkt_residual}
	\end{equation}
	where $\mu\ge 0$ is the target complementarity parameter of the perturbed system and $m$ denotes the number of inequality constraints. The value of $\mu$ is chosen adaptively at each iteration, guided by the current average complementarity $\frac{s^\top z}{m}$: the Mehrotra predictor step uses $\mu=0$, while the corrector step uses $\mu=\sigma\,\frac{s^\top z}{m}$ with the centering parameter $\sigma$ defined in Appendix~\ref{app:mehrotra}. Therefore, each IP iteration requires solving the following sparse linear system:
	\begin{equation}
		\underbrace{
			\begin{bmatrix}
				Q & A^\top & G^\top & 0 \\
				G & 0 & 0 & I \\
				A & 0 & 0 & 0 \\
				0 & 0 & S & Z
			\end{bmatrix}
		}_{J(x,v,z,s)}
		\underbrace{
			\begin{bmatrix}
				\Delta x \\
				\Delta v \\
				\Delta z \\
				\Delta s
			\end{bmatrix}
		}_{\Delta \omega}
		=
		-
		\underbrace{
			\begin{bmatrix}
				r_{\mathrm d} \\
				r_{\mathrm p}^{\mathrm{ineq}} \\
				r_{\mathrm p}^{\mathrm{eq}} \\
				r_{\mathrm c}
			\end{bmatrix}
		}_{F(x,v,z,s)}.
		\label{eq:kkt_system}
	\end{equation}

	After solving \eqref{eq:kkt_system}, the primal and dual variables are updated along the Newton direction:
	\begin{equation}
		(x^{(k+1)},s^{(k+1)},v^{(k+1)},z^{(k+1)})
		=
		(x^{(k)},s^{(k)},v^{(k)},z^{(k)})
		+
		\alpha(\Delta x,\Delta s,\Delta v,\Delta z).
		\label{eq:pdip_update}
	\end{equation}
	The step size $\alpha \in (0,1]$ is chosen to ensure that the next iterate remains strictly interior.
	The following remarks summarize implementation details that improve the efficiency and numerical stability of the IP solver.

	\begin{remark}
		\label{rem:reduced_kkt}
		\textit{Reduced KKT system.}
		The full Newton system in \eqref{eq:kkt_system} can be reduced by block elimination, equivalently by applying a Schur complement argument \citep{boyd2004convex}. In particular, the directions $\Delta s$ and $\Delta z$ can be eliminated, yielding a smaller saddle-point system in the variables $\Delta x$ and $\Delta v$:
		\begin{equation}
			\begin{bmatrix}
				K & A^\top \\
				A & 0
			\end{bmatrix}
			\begin{bmatrix}
				\Delta x \\
				\Delta v
			\end{bmatrix}
			=
			\begin{bmatrix}
				\mathrm{rhs}_x \\
				\mathrm{rhs}_v
			\end{bmatrix},
			\label{eq:reduced_kkt_remark}
		\end{equation}
		where
		\begin{equation}
			K = Q + G^\top D G,
			\qquad
			D = ZS^{-1} = \operatorname{diag}(z_i/s_i).
			\label{eq:K_D_definition}
		\end{equation}
		This reduction decreases the dimension of the linear system, which is central to the efficiency of the implementation. The detailed derivation is provided in Appendix~\ref{sec:reduced_kkt}.
	\end{remark}
	
	\begin{remark}
		\label{rem:sparsity_reuse}
		\textit{Reuse of symbolic factorization.}
		For QP, the sparsity pattern of the reduced KKT matrix in \eqref{eq:reduced_kkt_remark} remains unchanged across IP iterations; only the numerical values of some matrix entries change. Therefore, the ordering and symbolic factorization computed during the first linear solve can be reused in subsequent iterations. This avoids repeated symbolic analysis and reduces the computational overhead of sparse matrix factorization.
	\end{remark}

	\begin{remark}
		\label{rem:stopping_criteria}
		\textit{Stopping criteria.} The IP iterations are terminated when the primal and dual feasibility residuals and the duality gap are all below a prescribed tolerance $\varepsilon>0$. To make the stopping criterion robust to problem scaling, we use normalized primal and dual residuals:
		\begin{equation}
			r_{\mathrm p}^{\mathrm{ineq}}
			=
			\frac{\|Gx+s-h\|_\infty}{1+\max\{\|Gx\|_\infty,\|s\|_\infty,\|h\|_\infty\}},
			\qquad
			r_{\mathrm p}^{\mathrm{eq}}
			=
			\frac{\|Ax-b\|_\infty}{1+\max\{\|Ax\|_\infty,\|b\|_\infty\}},
		\end{equation}
		and define
		\begin{equation}
			r_{\mathrm p}
			=
			\max\left\{
			r_{\mathrm p}^{\mathrm{ineq}},
			r_{\mathrm p}^{\mathrm{eq}}
			\right\}.
		\end{equation}
		Similarly, the normalized dual residual is
		\begin{equation}
			r_{\mathrm d}
			=
			\frac{\|Qx+G^\top z+A^\top v+q\|_\infty}
			{1+\max\{\|Qx\|_\infty,\|G^\top z\|_\infty,\|A^\top v\|_\infty,\|q\|_\infty\}}.
		\end{equation}
		Finally, optimality is measured by the normalized duality gap
		\begin{equation}
			r_{\mathrm{gap}}
			=
			\frac{\lvert p-d\rvert}{1+\max\{\lvert p\rvert,\lvert d\rvert\}},
			\qquad
			p=\tfrac{1}{2}x^\top Qx+q^\top x,
			\qquad
			d=-\tfrac{1}{2}x^\top Qx-h^\top z-b^\top v,
		\end{equation}
		where $p$ and $d$ are the primal and dual objective values at the current iterate; we use the symbol $r_{\mathrm{gap}}$ to distinguish this scalar quantity from the complementarity residual vector $r_{\mathrm c}$ in \eqref{eq:perturbed_kkt_residual}. Convergence is declared when
		\begin{equation}
			r_{\mathrm p}\le \varepsilon,
			\qquad
			r_{\mathrm d}\le \varepsilon,
			\qquad
			r_{\mathrm{gap}}\le \varepsilon.
			\label{eq:stopping_criteria}
		\end{equation}
	\end{remark}
	
		Algorithm~\ref{alg:pdip} in Appendix~\ref{app:pdip_alg} assembles these ingredients into the complete primal--dual interior-point method. It requires an initial interior point, which the path-following ADMM method introduced in the next section is designed to provide. For completeness, Appendix~\ref{sec:app_initialization} also describes a commonly used heuristic initialization strategy, employed for example in CVXOPT \citep{andersen2012interior} and Clarabel \citep{goulart2024clarabel}. This heuristic is included in our Python implementation so that IP can also be run as a standalone solver without ADMM initialization.

		
		\subsection{Path-following ADMM}
		
		A key challenge in using ADMM to warm-start an IP method is that the two algorithms approach the solution through different regions of the primal--dual space. In IP, the iterates are maintained strictly interior, with $s>0$ and $z>0$, and the complementarity condition is relaxed as $SZe \approx \mu e$, where the barrier parameter $\mu>0$ is gradually reduced to zero. Thus, IP iterates are designed to remain close to the central path while simultaneously reducing the primal and dual residuals.
		
		In contrast, standard ADMM is not designed to follow the central path. Although ADMM can efficiently reduce primal and dual residuals, its iterates do not generally satisfy an interior-point complementarity relation of the form $SZe\approx \mu e$. Consequently, an ADMM iterate may be close to optimality in terms of feasibility or objective value, but still be poorly aligned with the central path. Such a point may provide a weak warm start for IP, because the interior-point method may need several iterations to move the iterate back toward a central-path neighborhood before benefiting from its fast local convergence.
		
		To address this mismatch, we introduce a path-following variant of ADMM. The main idea is to modify the primal problem by adding a logarithmic barrier term for the slack variables. This construction is inspired by primal interior-point methods, where a log-barrier term is added to the objective and its weight is gradually reduced. In our approach, however, the barrier weight is not driven to zero during the ADMM phase: it is held at a level $\mu$ matched to the ADMM accuracy target $\varepsilon_{\mathrm{ADMM}}$ (constant within each iteration, which is all the analysis requires; the precise choice is given in Section~\ref{subsec:adaptive_rho}). We then run ADMM until the primal and dual residuals fall below $\varepsilon_{\mathrm{ADMM}}$. Because the barrier pins the complementarity level at $\mu$, the resulting point is approximately feasible and stationary while remaining aligned with the central-path neighborhood associated with $\mu$. This path-following ADMM solution is then used to initialize IP, which continues the optimization by further reducing $\mu$ and refining the solution to high accuracy.
		
		Specifically, we apply ADMM to the following log-barrier-modified QP and its dual (whose objective is stated up to the additive constant $m\mu(1-\log\mu)$):
		\begin{equation}
			\label{eq:org_prob_modified}
			\begin{aligned}
				\textit{(primal)} \qquad \min_{x \in \mathbb{R}^n,\; s} \quad & \frac{1}{2}x^\top Qx + q^\top x - \mu \sum_{i=1}^{m} \log (s_i) \\
				\text{s.t.} \quad & Gx + s = h, \quad Ax = b, \quad s > 0, \\[4pt]
				\textit{(dual)} \qquad \max_{x,z,v} \quad & -\frac{1}{2}x^\top Qx - h^\top z - b^\top v + \mu \sum_{i=1}^{m} \log(z_i) \\
				\text{s.t.} \quad & Qx + G^\top z + A^\top v = -q, \quad z > 0.
			\end{aligned}
		\end{equation}
		The KKT conditions of the log-barrier-modified problem have the same primal and dual feasibility equations as the original QP, while the complementarity condition is replaced by $SZe=\mu e$; the derivation of this dual and of these optimality conditions is given in Appendix~\ref{sec:app:dual_derivation}.
		
		This is precisely the central-path relation used by IP for a fixed barrier parameter $\mu$. In the following, we show that when ADMM is applied to \eqref{eq:org_prob_modified} with a particular ordering of the variable updates, the iterates satisfy this perturbed complementarity condition throughout the ADMM phase. Therefore, if ADMM is run until the primal and dual residuals reach $\varepsilon_{\mathrm{ADMM}}$, with $\mu$ chosen so that the associated duality gap sits at the same level (Section~\ref{subsec:adaptive_rho}), the resulting point has balanced residuals: it is approximately primal feasible, approximately dual feasible, and consistent with the central-path relation at $\mu$. This makes it an effective warm start for IP, which can then continue from this point by decreasing $\mu$ and converging to a high-accuracy solution.
		
		
		\subsubsection{Brief Overview of ADMM}
		\label{subsec:admm_overview}
		
		ADMM is a first-order operator-splitting method designed for optimization problems with separable structure and coupling constraints. In its standard two-block form, ADMM solves problems of the form
		\begin{equation}
			\begin{aligned}
				\min_{u,w} \quad & f(u)+g(w) \\
				\text{s.t.} \quad & Bu+Cw=d,
			\end{aligned}
			\label{eq:admm_standard_form}
		\end{equation}
		where the objective is separable in the variables $u$ and $w$, while the constraint couples them. For a penalty parameter $\rho>0$ and multiplier $\lambda$, the augmented Lagrangian is
		\begin{equation}
			\mathcal{L}_{\rho}(u,w,\lambda)
			=
			f(u)+g(w)
			+\lambda^\top(Bu+Cw-d)
			+\frac{\rho}{2}\|Bu+Cw-d\|_2^2 .
			\label{eq:admm_aug_lagrangian_general}
		\end{equation}
		ADMM then alternates between minimizing this augmented Lagrangian with respect to one block of variables while keeping the other fixed, followed by a multiplier update:
		\begin{equation}
			\begin{aligned}
				u^{(k+1)}
				&=
				\arg\min_u \mathcal{L}_{\rho}(u,w^{(k)},\lambda^{(k)}),\\
				w^{(k+1)}
				&=
				\arg\min_w \mathcal{L}_{\rho}(u^{(k+1)},w,\lambda^{(k)}),\\
				\lambda^{(k+1)}
				&=
				\lambda^{(k)}
				+
				\rho\left(Bu^{(k+1)}+Cw^{(k+1)}-d\right).
			\end{aligned}
			\label{eq:admm_general_updates}
		\end{equation}
		
		In the log-barrier-modified QP considered here, the two blocks correspond to the primal decision variable $x$ and the slack variable $s$. The coupling constraint is $Gx+s=h,$ and the equality constraint $Ax=b$ is enforced directly in the $x$-update. Thus, ADMM alternates between an equality-constrained quadratic update for $x$, a separable log-barrier update for $s$, and a multiplier update for $z$, the dual variable associated with $Gx+s=h$. This splitting is particularly useful in our setting because the $x$-update reduces to solving a structured linear system, while the $s$-update has a closed-form componentwise solution. Most importantly, as shown below, when the variables are updated in the order $x \rightarrow s \rightarrow z$, the resulting iterates satisfy the perturbed complementarity relation $SZe=\mu e,$ which aligns the ADMM warm start with the central-path geometry of IP.
		
		We now specialize these general ADMM updates to the log-barrier-modified QP in \eqref{eq:org_prob_modified}.
		
		
		\subsubsection{ADMM Updates for the Log-Barrier-Modified Problem}
		\label{subsec:path_following_admm_updates}
		
		For a fixed barrier parameter $\mu>0$ and ADMM penalty parameter $\rho>0$, we split the variables into two blocks, $x$ and $s$, coupled through the constraint $Gx+s=h$; as above, $Ax=b$ is enforced directly in the $x$-update, and $z$ denotes the multiplier of the coupling constraint. The augmented Lagrangian is
		\begin{equation}
			\begin{aligned}
				\mathcal{L}_{\rho}(x,s,z)
				&=
				\frac{1}{2}x^\top Qx+q^\top x
				-\mu\sum_{i=1}^m \log(s_i)
				+z^\top(Gx+s-h)
				+\frac{\rho}{2}\|Gx+s-h\|_2^2 .
			\end{aligned}
			\label{eq:admm_aug_lagrangian}
		\end{equation}
		
		At iteration $k$, given $(s^{(k)},z^{(k)})$, the $x$-update is
		\begin{equation}
			\begin{aligned}
				x^{(k+1)}
				=
				\arg\min_{x}
				&\left\{
				\frac{1}{2}x^\top Qx+q^\top x
				+
				(z^{(k)})^\top(Gx+s^{(k)}-h)
				+
				\frac{\rho}{2}\|Gx+s^{(k)}-h\|_2^2
				\right\} \\
				&\;\text{s.t. } Ax=b .
			\end{aligned}
			\label{eq:admm_x_update}
		\end{equation}
		This is an equality-constrained quadratic problem. Its optimality conditions give the KKT system
		\begin{equation}
			\begin{bmatrix}
				Q+\rho G^\top G & A^\top\\
				A & 0
			\end{bmatrix}
			\begin{bmatrix}
				x^{(k+1)}\\
				v^{(k+1)}
			\end{bmatrix}
			=
			\begin{bmatrix}
				-q-G^\top z^{(k)}+\rho G^\top(h-s^{(k)})\\
				b
			\end{bmatrix},
			\label{eq:admm_xv_update}
		\end{equation}
		where $v^{(k+1)}$ is the multiplier associated with the equality constraint $Ax=b$.
		
		\begin{remark}
			\label{rem:admm_factorization_reuse}
			\textit{Factorization reuse.}
			For fixed $\rho$, the KKT matrix in the ADMM $x$-update \eqref{eq:admm_xv_update} remains unchanged across ADMM iterations; only the right-hand side changes. Hence, this matrix can be factorized once and reused in all subsequent ADMM iterations, making later iterations relatively inexpensive. In addition, this matrix has the same sparsity pattern as the reduced IP KKT matrix in \eqref{eq:reduced_kkt_remark},
			because $D$ is diagonal and both $G^\top D G$ and $G^\top G$ share the same sparsity structure. Therefore, the symbolic factorization and ordering computed during the ADMM phase can be reused in the subsequent IP phase, further reducing the cost of the warm-started method.
		\end{remark}
		
		Next, with $x^{(k+1)}$ fixed, the slack variable is updated by solving
		\begin{equation}
			s^{(k+1)}
			=
			\arg\min_{s>0}
			\left\{
			-\mu\sum_{i=1}^m \log(s_i)
			+
			(z^{(k)})^\top(Gx^{(k+1)}+s-h)
			+
			\frac{\rho}{2}\|Gx^{(k+1)}+s-h\|_2^2
			\right\}.
			\label{eq:admm_s_update}
		\end{equation}
		This problem is separable across the components of $s$. For each component $i$, the first-order optimality condition is
		\begin{equation}
			-\frac{\mu}{s_i}
			+
			z_i^{(k)}
			+
			\rho\left(G_i x^{(k+1)}+s_i-h_i\right)
			=0,
			\label{eq:s_update_optimality}
		\end{equation}
		where $G_i$ denotes the $i$th row of $G$. Rearranging \eqref{eq:s_update_optimality} gives the quadratic equation
		\begin{equation}
			\rho s_i^2
			+
			\left(z_i^{(k)}+\rho(G_i x^{(k+1)}-h_i)\right)s_i
			-
			\mu
			=
			0.
			\label{eq:s_update_quadratic}
		\end{equation}
		Since the slack variable must satisfy $s_i>0$, we take the positive root. For compact notation, define
		\begin{equation}
			r_1^{(k+1)}
			=
			h-Gx^{(k+1)}-\frac{z^{(k)}}{\rho},
			\qquad
			r_2
			=
			\frac{2\mu}{\rho}.
			\label{eq:s_update_r1_r2}
		\end{equation}
		Then the slack update can be written componentwise as
		\begin{equation}
			s_i^{(k+1)}
			=
			\frac{1}{2}
			\left(
			r_{1,i}^{(k+1)}
			+
			\sqrt{
				\left(r_{1,i}^{(k+1)}\right)^2
				+
				2r_2
			}
			\right),
			\qquad i=1,\ldots,m.
			\label{eq:admm_s_closed_form}
		\end{equation}
		The following lemma shows that the slack update automatically maintains $s^{(k+1)}>0$.

		\begin{lemma}
			\label{lem:admm_slack_positivity}
			For fixed $\mu>0$ and $\rho>0$, the slack iterates produced by \eqref{eq:admm_s_closed_form} are strictly positive at every ADMM iteration.
		\end{lemma}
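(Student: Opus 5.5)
The plan is to verify positivity directly from the closed form \eqref{eq:admm_s_closed_form}, componentwise. The argument needs no induction on earlier iterates: $s_i^{(k+1)}$ depends only on the real number $r_{1,i}^{(k+1)}$ and the constant $r_2=2\mu/\rho$. It is enough that $r_{1,i}^{(k+1)}$ is finite, which holds because $x^{(k+1)}$ is well defined by the nonsingular system \eqref{eq:admm_xv_update} under standing assumptions (i)--(ii), and $z^{(k)}$ is a finite vector from the multiplier update.

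First, since $\mu>0$ and $\rho>0$, we have $r_2>0$. For any real $a=r_{1,i}^{(k+1)}$ this gives
\[
\sqrt{a^2+2r_2}>\sqrt{a^2}=|a|\ge -a .
\]
Adding $a$ to both sides yields $a+\sqrt{a^2+2r_2}>0$, so $s_i^{(k+1)}>0$ for every $i$. The strictness comes only from $r_2>0$. If $a\ge0$, positivity is immediate. If $a<0$, the square root strictly exceeds $|a|$.

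As a consistency check, I would also verify that this root is the correct one. Substituting $r_1$ and $r_2$, the quadratic \eqref{eq:s_update_quadratic} becomes $s_i^2-r_{1,i}s_i-\mu/\rho=0$. The product of its roots is $-\mu/\rho<0$, so the roots have opposite signs and exactly one is positive, namely the "$+$" root. Its discriminant $r_{1,i}^2+4\mu/\rho=r_{1,i}^2+2r_2$ matches \eqref{eq:admm_s_closed_form}. The subproblem \eqref{eq:admm_s_update} is strictly convex on $s>0$, and its objective tends to $+\infty$ as $s_i\to0^+$. Its unique minimizer is therefore interior and coincides with this positive root. This confirms that the formula is the true argmin and not merely some positive number.

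I expect no real obstacle here. The only care needed is to make the inequality strict for negative $r_{1,i}$, and that uses $\mu>0$ essentially. It is worth noting that the claim would fail at $\mu=0$, where the formula gives $s_i=0$ whenever $r_{1,i}\le0$.
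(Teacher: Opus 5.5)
Your argument is correct and is essentially the paper's proof: both use $r_2=2\mu/\rho>0$ to get $\sqrt{(r_{1,i}^{(k+1)})^2+2r_2}>|r_{1,i}^{(k+1)}|$, and then conclude positivity from $r_{1,i}^{(k+1)}+|r_{1,i}^{(k+1)}|\ge 0$. Your added check, that the ``$+$'' root is the unique minimizer of the $s$-subproblem, is sound; the paper only asserts this in the text before the lemma, with no separate argument.
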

		
		\noindent\textbf{Proof.}
		From \eqref{eq:admm_s_closed_form}, each component of the slack update is given by
		\[
		s_i^{(k+1)}
		=
		\frac{1}{2}
		\left(
		r_{1,i}^{(k+1)}
		+
		\sqrt{
			\left(r_{1,i}^{(k+1)}\right)^2+2r_2
		}
		\right),
		\]
		where $r_2=2\mu/\rho$. Since $\mu>0$ and $\rho>0$, we have $r_2>0$. Therefore,
		\[
		\sqrt{
			\left(r_{1,i}^{(k+1)}\right)^2+2r_2
		}
		>
		\left|r_{1,i}^{(k+1)}\right|.
		\]
		It follows that
		\[
		r_{1,i}^{(k+1)}
		+
		\sqrt{
			\left(r_{1,i}^{(k+1)}\right)^2+2r_2
		}
		>
		r_{1,i}^{(k+1)}
		+
		\left|r_{1,i}^{(k+1)}\right|.
		\]
		The right-hand side is always nonnegative, since
		\[
		r_{1,i}^{(k+1)}+\left|r_{1,i}^{(k+1)}\right|
		=
		\begin{cases}
			2r_{1,i}^{(k+1)}, & r_{1,i}^{(k+1)}\ge 0,\\
			0, & r_{1,i}^{(k+1)}<0.
		\end{cases}
		\]
		Hence,
		\[
		r_{1,i}^{(k+1)}
		+
		\sqrt{
			\left(r_{1,i}^{(k+1)}\right)^2+2r_2
		}
		>0,
		\]
		and therefore $s_i^{(k+1)}>0$ for all $i=1,\ldots,m$. This completes the proof. \hfill$\square$

		Finally, after updating $x$ and $s$, the multiplier for the coupling constraint $Gx+s=h$ is updated as
		\begin{equation}
			z^{(k+1)}
			=
			z^{(k)}
			+
			\rho\left(Gx^{(k+1)}+s^{(k+1)}-h\right).
			\label{eq:admm_z_update}
		\end{equation}
		
		The following theorem shows that this update order preserves the perturbed complementarity relation associated with the central path.
		\begin{theorem}
			\label{thm:admm_complementarity}
			Consider the ADMM updates \eqref{eq:admm_x_update}--\eqref{eq:admm_z_update}
			applied to the log-barrier-modified problem \eqref{eq:org_prob_modified}
			with fixed $\mu>0$ and $\rho>0$. If the variables are updated in the order $x \;\rightarrow\; s \;\rightarrow\; z$, then, for every ADMM iteration $k$,
			\begin{equation}
				S^{(k+1)}Z^{(k+1)}e=\mu e.
				\label{eq:admm_complementarity_matrix}
			\end{equation}
		\end{theorem}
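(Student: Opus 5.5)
The plan is to combine the first-order optimality condition of the $s$-update with the $z$-update, componentwise. Fix an iteration $k$ and an index $i$.

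By Lemma~\ref{lem:admm_slack_positivity}, $s_i^{(k+1)}>0$. The $s$-subproblem \eqref{eq:admm_s_update} is separable and strictly convex on $s_i>0$: it is $-\mu\log s_i$ plus a convex quadratic. Hence its unique minimizer is the positive root \eqref{eq:admm_s_closed_form} of \eqref{eq:s_update_quadratic}, and it satisfies the stationarity equation \eqref{eq:s_update_optimality} exactly:
\[
z_i^{(k)}+\rho\bigl(G_i x^{(k+1)}+s_i^{(k+1)}-h_i\bigr)=\frac{\mu}{s_i^{(k+1)}}.
\]

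Next, compare the left-hand side with the multiplier update \eqref{eq:admm_z_update}. It is exactly the $i$th component of $z^{(k)}+\rho(Gx^{(k+1)}+s^{(k+1)}-h)$, that is, $z_i^{(k+1)}$. This step uses the ordering $x\to s\to z$ in two ways: the $s$-update uses the new $x^{(k+1)}$ together with the old $z^{(k)}$, and the $z$-update uses the same $x^{(k+1)}$ and the freshly computed $s^{(k+1)}$.

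Therefore $z_i^{(k+1)}=\mu/s_i^{(k+1)}$, and multiplying by $s_i^{(k+1)}>0$ gives $s_i^{(k+1)}z_i^{(k+1)}=\mu$ for every $i$. In matrix form this is $S^{(k+1)}Z^{(k+1)}e=\mu e$. As a by-product, $z^{(k+1)}>0$.

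The only step that needs care is confirming that the closed-form root coincides with the stationarity condition evaluated at the same $x^{(k+1)}$ and $z^{(k)}$ that enter the $z$-update. The lemma's positivity guarantees that division by $s_i^{(k+1)}$ is legitimate. No induction over $k$ is required, since the identity holds afresh at each iteration regardless of the initialization $(s^{(0)},z^{(0)})$.
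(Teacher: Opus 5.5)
Your proof is correct and matches the paper's argument: evaluate the stationarity condition of the $s$-update at $s_i^{(k+1)}$, then recognize $z_i^{(k)}+\rho\bigl(G_i x^{(k+1)}+s_i^{(k+1)}-h_i\bigr)$ as $z_i^{(k+1)}$ from the multiplier update, which gives $s_i^{(k+1)}z_i^{(k+1)}=\mu$. Your extra justifications (strict convexity, uniqueness of the positive root, and positivity from Lemma~\ref{lem:admm_slack_positivity} to justify the division) are points the paper leaves implicit.
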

		
		\noindent\textbf{Proof.}
		Fix an ADMM iteration $k$. The first-order optimality condition of the slack update \eqref{eq:admm_s_update}, given in \eqref{eq:s_update_optimality}, evaluated at $s^{(k+1)}$, gives
		\[
		-\frac{\mu}{s_i^{(k+1)}}
		+
		z_i^{(k)}
		+
		\rho\left(G_i x^{(k+1)}+s_i^{(k+1)}-h_i\right)
		=0.
		\]
		By the multiplier update \eqref{eq:admm_z_update}, the last two terms are exactly $z_i^{(k+1)}$. Hence,
		\[
		-\frac{\mu}{s_i^{(k+1)}}+z_i^{(k+1)}=0,
		\]
		which implies
		\[
		s_i^{(k+1)}z_i^{(k+1)}=\mu,
		\qquad i=1,\ldots,m.
		\]
		Equivalently, $S^{(k+1)}Z^{(k+1)}e=\mu e$. \hfill$\square$
		
		\begin{corollary}
			\label{cor:strict_interiority}
			Under the assumptions of Theorem~\ref{thm:admm_complementarity}, $z^{(k+1)}=\mu\, (S^{(k+1)})^{-1}e>0$ componentwise. Combined with Lemma~\ref{lem:admm_slack_positivity}, the ADMM iterates therefore remain strictly interior, i.e., $s^{(k)}>0$ and $z^{(k)}>0$ for all $k\ge 1$.
		\end{corollary}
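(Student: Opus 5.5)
The corollary follows by combining Theorem~\ref{thm:admm_complementarity} with Lemma~\ref{lem:admm_slack_positivity}; no new estimate is needed.

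First fix an iteration $k\ge 0$. By Lemma~\ref{lem:admm_slack_positivity}, every component satisfies $s_i^{(k+1)}>0$. In particular $S^{(k+1)}$ is invertible, so $(S^{(k+1)})^{-1}e$ is well defined and has strictly positive entries $1/s_i^{(k+1)}$.

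Second, Theorem~\ref{thm:admm_complementarity} gives $s_i^{(k+1)}z_i^{(k+1)}=\mu$ for each $i$. Dividing by $s_i^{(k+1)}\neq 0$ gives
\[
z_i^{(k+1)}=\frac{\mu}{s_i^{(k+1)}}, \qquad\text{that is,}\qquad z^{(k+1)}=\mu\,(S^{(k+1)})^{-1}e .
\]
Since $\mu>0$ and $s_i^{(k+1)}>0$, each component $z_i^{(k+1)}$ is strictly positive.

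Finally, reindex with $k+1\mapsto k$. The two steps hold for every $k\ge 0$, so $s^{(k)}>0$ and $z^{(k)}>0$ for all $k\ge 1$.

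The only point needing care is the indexing. The initial pair $(s^{(0)},z^{(0)})$ is arbitrary: Lemma~\ref{lem:admm_slack_positivity} and the theorem place no sign restriction on $z^{(0)}$, since $r_1^{(k+1)}$ may be any real vector and the lemma still applies. This is why the claim is stated for $k\ge 1$ and not for $k=0$.
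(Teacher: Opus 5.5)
Your proposal is correct and follows essentially the same argument the paper gives inline: Lemma~\ref{lem:admm_slack_positivity} makes $S^{(k+1)}$ invertible with positive diagonal, and dividing $s_i^{(k+1)}z_i^{(k+1)}=\mu$ from Theorem~\ref{thm:admm_complementarity} by $s_i^{(k+1)}>0$ yields $z^{(k+1)}=\mu\,(S^{(k+1)})^{-1}e>0$. Your indexing remark also agrees with Algorithm~\ref{alg:path_following_admm}, which initializes $s^{(0)}=z^{(0)}=0$; this is why the claim is stated only for $k\ge 1$.
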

		
		Theorem~\ref{thm:admm_complementarity} shows that, with the update order $x\rightarrow s\rightarrow z$, the ADMM iterates satisfy the same perturbed complementarity relation used by IP for the fixed barrier parameter $\mu$. Therefore, once the primal and dual residuals have been reduced to the tolerance $\varepsilon_{\mathrm{ADMM}}$, the resulting ADMM iterate is approximately feasible, approximately stationary, and already aligned with the central-path geometry. This makes it a natural warm start for the subsequent IP phase.
		
		
		\subsubsection{Stopping Criteria and Adaptive Barrier Weight}
		\label{subsec:adaptive_rho}
		
		The ADMM penalty parameter $\rho$ is updated adaptively to balance the primal and dual residuals, following the residual-balancing strategy of \citet{he2000alternating} as used in OSQP \citep{stellato2020osqp}; the rule, and why it preserves the complementarity relation of Theorem~\ref{thm:admm_complementarity}, are given in Appendix~\ref{app:adaptive_rho}.
		For path-following ADMM, we use stopping criteria analogous to those of IP in Remark~\ref{rem:stopping_criteria}, but without explicitly checking the duality gap. The gap does not need to be monitored because it is known in advance. By Theorem~\ref{thm:admm_complementarity}, every ADMM iterate satisfies $SZe=\mu e$, so $s^\top z=m\mu$ at all times. Moreover, at any iterate the duality gap satisfies the exact identity $p-d = s^\top z + x^\top(Qx+G^\top z+A^\top v+q) - z^\top(Gx+s-h) - v^\top(Ax-b)$, where the parenthesized vectors are the stationarity and primal feasibility residuals of \eqref{eq:KKT_residual} (see Appendix~\ref{sec:app:dual_derivation}); once the primal and dual residuals are small, the residual terms are negligible and $p-d\approx s^\top z=m\mu$. The barrier weight is chosen in the remark below precisely so that this known gap, in normalized form, equals the tolerance $\varepsilon_{\mathrm{ADMM}}$. The ADMM phase is therefore terminated once the primal and dual residuals are sufficiently small, with the duality gap landing at the target level automatically.
		
		\begin{remark}
			\label{rem:adaptive_mu}
			\textit{Choice of the barrier weight.} The analysis above holds for any barrier weight; what remains is to choose it. As just noted, the duality gap at the handoff equals $s^\top z=m\mu$, which in the normalized form of Remark~\ref{rem:stopping_criteria} reads $m\mu/(1+\max\{\lvert p\rvert,\lvert d\rvert\})$, where $p$ and $d$ are the primal and dual objective values at the current iterate. Setting this equal to $\varepsilon_{\mathrm{ADMM}}$ determines the barrier weight:
			\begin{equation}
				\mu
				=
				\varepsilon_{\mathrm{ADMM}}\,\frac{1+\max\{\lvert p\rvert,\lvert d\rvert\}}{m}.
				\label{eq:mu_adaptive}
			\end{equation}
			With this choice, the ADMM phase terminates at a point where the primal residual, the dual residual, and the normalized duality gap are all approximately $\varepsilon_{\mathrm{ADMM}}$: a balanced, well-centered warm start at exactly the accuracy level requested of the ADMM phase. Because $p$ and $d$ are not known in advance, the weight is re-evaluated at each residual check using the current objective values. Since $\mu$ enters only the closed-form $s$-update \eqref{eq:admm_s_closed_form}, updating it requires no new matrix factorization.
		\end{remark}
		
		Algorithm~\ref{alg:path_following_admm} summarizes the path-following ADMM method. The returned iterate $(x,v,z,s)$ directly initializes the IP method of Section~\ref{sec:pdip}, completing the two-phase Warm-IP solver.

		\begin{algorithm}[t]
			\small
			\caption{Path-Following ADMM for Warm-Starting IP}
			\label{alg:path_following_admm}
			\begin{algorithmic}[1]
				\Require QP data $(Q,q,G,h,A,b)$, initial penalty $\rho>0$, tolerance $\varepsilon_{\mathrm{ADMM}}$, $\rho$ update threshold $\tau_{\rho}$
				\Ensure Warm-start point $(x,v,z,s)$ for IP
				
				\State Initialize $s^{(0)}\gets 0$, $z^{(0)} \gets 0$, and $r_{\mathrm p}^{(0)}, r_{\mathrm d}^{(0)} \gets \infty$
				\State Set $k\gets 0$ and $\mu\gets \varepsilon_{\mathrm{ADMM}}$
				
				\While{$r_{\mathrm p}^{(k)}>\varepsilon_{\mathrm{ADMM}}$ or $r_{\mathrm d}^{(k)}>\varepsilon_{\mathrm{ADMM}}$}
				
				\State \textbf{$x$-update:} Solve the KKT system \eqref{eq:admm_xv_update} for $(x^{(k+1)},v^{(k+1)})$
				\Statex \hspace{\algorithmicindent}{\footnotesize$\triangleright$ reuse the previous factorization (see Remark~\ref{rem:admm_factorization_reuse}).}
				\State \textbf{$s$-update:} Compute $s^{(k+1)}$ using the closed-form update \eqref{eq:admm_s_closed_form}
				\Statex \hspace{\algorithmicindent}{\footnotesize$\triangleright$ $s^{(k+1)}>0$ by Lemma~\ref{lem:admm_slack_positivity}.}
				
				\State \textbf{$z$-update:} Update the multiplier $z^{(k+1)}$ using \eqref{eq:admm_z_update}
				\Statex \hspace{\algorithmicindent}{\footnotesize$\triangleright$ the order $x\rightarrow s\rightarrow z$ ensures $S^{(k+1)}Z^{(k+1)}e=\mu e$ by Theorem~\ref{thm:admm_complementarity}.}
				
				\State Compute the ADMM primal and dual residuals $r_{\mathrm p}^{(k+1)}$ and $r_{\mathrm d}^{(k+1)}$
				\Statex \hspace{\algorithmicindent}{\footnotesize$\triangleright$ use the normalized versions in Remark~\ref{rem:stopping_criteria}.}
				\State Update $\mu$ by the adaptive rule \eqref{eq:mu_adaptive}
				\Statex \hspace{\algorithmicindent}{\footnotesize$\triangleright$ $\mu$ enters only the $s$-update \eqref{eq:admm_s_closed_form}; no refactorization is needed (Remark~\ref{rem:adaptive_mu}).}
				\State Optionally update $\rho$ using the adaptive rule \eqref{eq:rho_scale}--\eqref{eq:rho_update} (Appendix~\ref{app:adaptive_rho})
				\Statex \hspace{\algorithmicindent}{\footnotesize$\triangleright$ If $\rho$ changes, refactorize the numerical KKT matrix; reuse the previous symbolic factorization by Remark~\ref{rem:admm_factorization_reuse}.}
				
				\State Set $k\gets k+1$
				
				\EndWhile
				
				\State \Return $(x^{(k)},v^{(k)},z^{(k)},s^{(k)})$
				
			\end{algorithmic}
		\end{algorithm}

		\section{Numerical Experiments}
		\label{numerical}
		
		This section presents the computational evaluation of Warm-IP across four benchmark classes: model predictive control (MPC), Maros--Mészáros (MM), synthetic QPs, and intensity-modulated radiation therapy (IMRT) treatment planning problems. We compare Warm-IP against five state-of-the-art solvers: two commercial solvers, \textit{MOSEK} \citep{aps2022mosek} and \textit{Gurobi} \citep{gurobi}, and three open-source solvers, \textit{Clarabel} \citep{goulart2024clarabel}, \textit{PIQP} \citep{schwan2023piqp}, and \textit{OSQP} \citep{stellato2020osqp}. The remainder of this section is organized as follows. We first describe the experimental settings, including the computing environment, solver interfaces, and tolerance settings. We then describe the synthetic and radiotherapy benchmark sets. Finally, we present solver comparisons for each benchmark class, followed by aggregate analyses across problem sizes.
		
		
		\subsection{Experimental Settings}
		\label{subsec:experimental_settings}
		
		All experiments were conducted on a desktop workstation equipped with two Intel\textsuperscript{\textregistered} Xeon\textsuperscript{\textregistered} Gold 5420+ processors (2.00~GHz) and 256~GB of RAM. All implementations were run in Python 3.12 on the Windows 10 operating system. Reported runtimes are wall-clock solve times, excluding the conversion of each instance to the solver's input format; for Warm-IP they include both the ADMM and the IP phase.
		
		Warm-IP is implemented entirely in Python (publicly available in the \href{https://github.com/Radiotherapy-Optimization/Warm-IP}{Warm-IP GitHub repository}), which favors transparency and extensibility but incurs overhead relative to the compiled C/C++/Rust cores underlying the benchmark solvers.
		
		We benchmarked against MOSEK 11.2.3 (accessed through CVXPY), Gurobi 13.0.2, Clarabel 0.11.1, PIQP 0.6.3, and OSQP 1.1.3, each called through its Python interface. The termination tolerances of all solvers were set to $10^{-6}$; every other solver parameter, including the iteration limits and all preprocessing, scaling, and ordering options, was left at its default value. In addition, every solve (including Warm-IP and IP) was subject to a wall-clock time limit of $3{,}600$ seconds on the IMRT and synthetic benchmarks and $360$ seconds on the MM and MPC benchmarks; a solver exceeding the limit was terminated and counted as failed on that instance (a terminated solver returns no solution).
		
		For Warm-IP, the path-following ADMM phase was terminated when its primal and dual residuals fell below $\varepsilon_{\mathrm{ADMM}}=0.5$ (Section~\ref{subsec:adaptive_rho}), and the IP phase at tolerance $\varepsilon_{\mathrm{IP}}=10^{-6}$; the Newton systems were factorized with the Intel MKL PARDISO sparse direct solver. Because many MM instances are poorly scaled, the Ruiz equilibration procedure implemented in OSQP \citep{stellato2020osqp} was applied to the MM problems as a preprocessing step for Warm-IP and IP (the external solvers received the original data and apply their own internal scaling; all accuracy metrics are evaluated on the original data).
		
		All solvers were evaluated with the same accuracy metrics. Given the primal--dual solution returned by a solver, we computed the normalized dual residual and normalized duality gap of Remark~\ref{rem:stopping_criteria}, together with the normalized primal residual in \emph{constraint-violation form}---with $\|\max(Gx-h,0)\|_\infty$ in place of $\|Gx+s-h\|_\infty$ in the numerator of $r_{\mathrm p}^{\mathrm{ineq}}$, so that solvers are not penalized for inactive slack---and summarized them by their maximum, denoted $\mathrm{KKT}_{\max}$. Warm-IP and IP return the best iterate encountered by this measure. A solver is considered to have \emph{failed} on an instance when it returns no solution or when $\mathrm{KKT}_{\max}>10^{-3}$.
		
		Equality constraints were passed to the external solvers in their native form $Ax=b$, but converted to pairs of opposing inequalities for Warm-IP and IP ($Ax=b \;\Rightarrow\; Ax\le b$ and $-Ax\le -b$): the reduced KKT systems then remain positive definite and factorize robustly, whereas the (also implemented) direct treatment of $Ax=b$ yields indefinite saddle-point systems that are more delicate to factorize.

		
		\subsection{Synthetic Benchmark Generation}
		\label{sec:synthetic_benchmark}
		To supplement the standard benchmark sets and obtain broader coverage across problem dimensions, we generated a collection of synthetic convex QP instances. Each instance has the form
		\[
		\min_x \quad \frac{1}{2}x^\top Qx + q^\top x
		\quad \text{s.t.} \quad Ax=b,\qquad Gx\le h.
		\]
		For a given number of variables $n$, equality constraints $m_{\mathrm{eq}}$, and inequality constraints $m_{\mathrm{ineq}}$, we generate a sparse random $n\times n$ matrix $M$ (density $0.01$) and set $Q = M^\top M + \delta I$ with $\delta=1$, so that $Q$ is positive definite and remains sparse, which is what allows the largest instances to be generated and solved. The constraint matrices $A$ and $G$ are sparse random matrices with densities $0.01$ and $0.001$, respectively. The nonzero entries of $M$, $A$, and $G$ are placed uniformly at random and drawn uniformly from $[0,1)$. Conditioning is controlled by an ill-conditioning parameter $\kappa\ge0$: the columns of $M$ are rescaled by log-uniform factors $10^{\kappa u_j}$ with $u_j\sim U[0,1]$, and the rows of $A$ and $G$ are rescaled by $10^{\kappa u_i}$ with $u_i\sim U[-1,1]$, spreading the constraint-row norms over $10^{\pm\kappa}$; this mimics the heterogeneous scaling found in real-world instances. We then draw a reference point $x_0$ with independent standard normal entries, set $q=-Qx_0$ (making $x_0$ the unconstrained minimizer of the objective), and compute the right-hand sides after rescaling as $b=Ax_0$ and $h=Gx_0+\xi$ with $\xi$ a strictly positive random vector whose entries are drawn independently and uniformly from $(0,1)$, so that $x_0$ is feasible with strictly positive inequality slacks. Varying $n$, $m_{\mathrm{eq}}$, and $m_{\mathrm{ineq}}$ yields controlled instances across problem sizes and variable-to-constraint ratios. All synthetic instances in our experiments use $\kappa=1$.
		
		
		\subsection{Intensity-Modulated Radiation Therapy (IMRT) Benchmark}
		\label{sec:radiotherapy_benchmark}
		
		Radiotherapy is a major cancer treatment modality in which high-energy radiation beams are delivered to a tumor while minimizing radiation exposure to nearby healthy organs. Figure~\ref{fig:rt_overview} illustrates the main components of this process. A radiotherapy machine delivers radiation from multiple beam directions, and the shape of each beam is modulated by a multi-leaf collimator (MLC), as shown in Figure~\ref{fig:rt_overview}(a). The treatment planning goal is to customize the shape and intensity of these beams for each patient's anatomy so that the tumor receives the prescribed dose while surrounding organs at risk (OARs) are spared as much as possible. Figure~\ref{fig:rt_overview}(b) shows a representative dose map for a lung case, where the high-dose region is concentrated around the tumor while dose to nearby structures is limited. Additional background on radiotherapy treatment planning  can be found in \citep{zarepisheh2022automated,romeijn2005column,shepard1999optimizing}.
		
		From an optimization perspective, treatment planning is performed by discretizing both the radiation delivery system and the patient anatomy. As shown in Figure~\ref{fig:rt_overview}(c), each beam is divided into small beamlets, and the patient anatomy is discretized into three-dimensional voxels. Let $x \in \mathbb{R}^n$ denote the vector of beamlet intensities, where $n$ is the number of beamlets, and let $D \in \mathbb{R}^{m \times n}$ denote the dose influence matrix, where $m$ is the number of voxels. The entry $D_{ij}$ represents the dose delivered to voxel $i$ by unit intensity of beamlet $j$. Therefore, the total dose distribution delivered to the patient is $d = Dx.$ For a structure $s$, such as the tumor or an OAR, we denote by $D^s$ the submatrix of $D$ corresponding to voxels in that structure.
		
		The IMRT treatment planning problem seeks a nonnegative beamlet intensity vector that provides adequate tumor irradiation and limits dose to healthy organs. A representative formulation is
		\begin{equation}
			\begin{aligned}
				\min_x \quad
				& 
				w_o \left\|D^{\mathrm{tar}}x-\mathrm{pres}\right\|_{+}^{2}
				+
				w_u \left\|D^{\mathrm{tar}}x-\mathrm{pres}\right\|_{-}^{2}
				+
				\sum_{s\in \mathcal{O}} w_s \left\|D^s x\right\|_2^2
				+
				w_{\mathrm{smooth}} \left\|Px\right\|_{2}^{2} \\
				\text{s.t.} \quad
				& \max(D^s x) \le d_s^{\max}, \qquad s\in \mathcal{S}_{\max}, \\
				& \mathrm{mean}(D^s x) \le d_s^{\mathrm{mean}}, \qquad s\in \mathcal{S}_{\mathrm{mean}}, \\
				& x \ge 0 .
			\end{aligned}
			\label{eq:radiotherapy_qp}
		\end{equation}
		Here, $D^{\mathrm{tar}}$ denotes the submatrix of $D$ associated with the tumor (target) voxels, $\mathrm{pres}$ denotes the tumor prescription dose specified by the radiation oncologist, and the one-sided penalties are defined componentwise as $\|v\|_{+}^{2}=\|\max\{v,0\}\|_2^2$ and $\|v\|_{-}^{2}=\|\max\{-v,0\}\|_2^2$. The first two terms penalize deviations of the tumor dose from prescription: the positive part penalizes overdose, while the negative part penalizes underdose. The third term penalizes dose delivered to OARs, and the smoothness term, defined through the matrix $P$, discourages large intensity differences between neighboring beamlets to improve deliverability. The constraints represent clinical requirements, including maximum-dose limits, mean-dose limits, and the physical nonnegativity of beamlet intensities. Further details on these formulations can be found in \citep{zarepisheh2022automated} and in the open-source PortPy package \citep{jhanwar2023portpy}.
		
		After standard reformulations of the piecewise-quadratic overdose and underdose penalties and the dose constraints, the model can be written in the canonical QP form \eqref{eq:org_prob}. The resulting QP instances are large because the dose influence matrix $D$ maps many beamlets, typically on the order of $10^3$--$10^4$, to many anatomical voxels, typically on the order of $10^5$--$10^6$, and because clinical protocols impose multiple structure-level constraints. The planning models underlying these instances are generated from real patient data with the open-source PortPy package developed by our team \citep{jhanwar2023portpy}. While PortPy targets radiotherapy researchers, the contribution of this work is to turn these planning problems into a benchmark for the broader optimization community: we formulate and extract 60 lung-cancer cases directly in canonical QP form, including $Q,q,G,h,A,b$, so that the instances can be used without any domain-specific knowledge of radiotherapy, and release them in the \href{https://github.com/Radiotherapy-Optimization/Warm-IP}{Warm-IP GitHub repository}.

		\begin{figure}[htbp]
			\centering
			\includegraphics[width=0.85\textwidth]{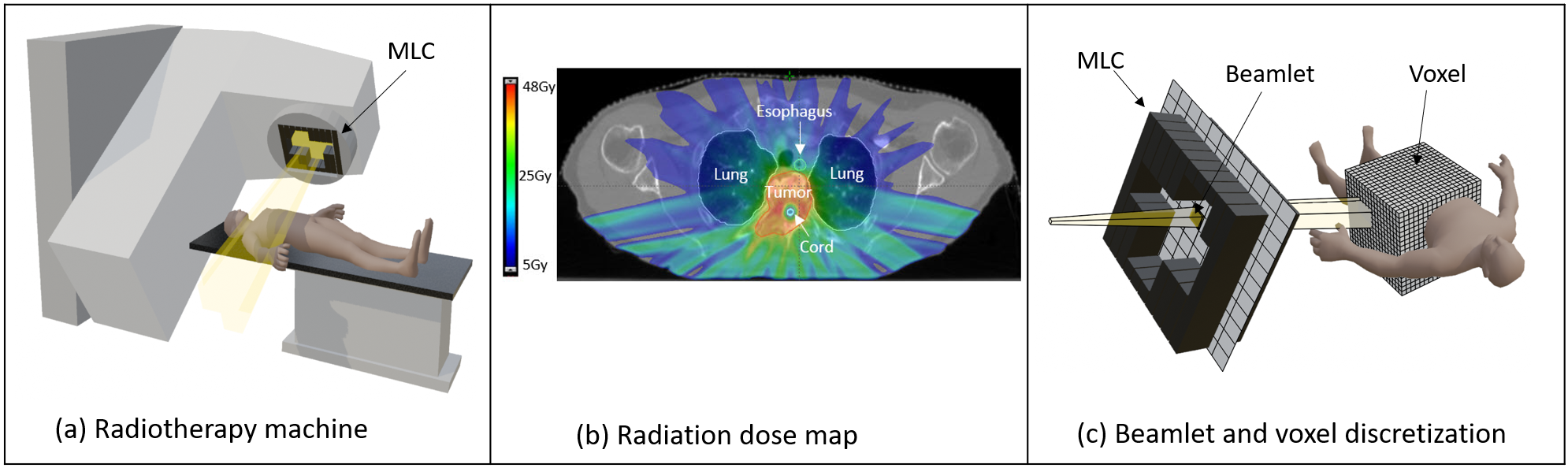}
			\caption{
				Radiotherapy treatment planning: (a) radiation beams shaped by a multi-leaf collimator (MLC); (b) a representative dose map for a lung case; (c) discretization into beamlets and voxels, linked by the dose influence matrix.
			}
			\label{fig:rt_overview}
		\end{figure}
		\subsection{Results}
		\label{subsec:results}
		
		Figure~\ref{benchmark_results} reports, separately for each benchmark class, each solver's mean runtime together with dotted companion bars indicating the percentage of instances on which it failed. In our experiments, a solver is considered to have failed on a problem instance if it returns no result or its returned solution has $\mathrm{KKT}_{\max}=\max(\text{primal residual},\,\text{dual residual},\,\text{duality gap})>10^{-3}$. The corresponding solution-quality distributions are reported in the size-aggregated analysis of Figure~\ref{overall_box}. Throughout this section, mean runtimes are computed over each solver's returned results; the full runtime distributions behind the means of Figures~\ref{benchmark_results} and~\ref{overall_box} are provided as box plots in Appendix~\ref{app:runtime_dists}. The results show clear differences in solver behavior across problem classes and problem scales.
		
		For the small-scale MPC instances, \textit{Clarabel} and \textit{PIQP} achieve the best performance. On the MM benchmark, which consists mostly of small- and medium-scale QPs with a few larger instances, \textit{Gurobi}, \textit{Clarabel}, and \textit{PIQP} achieve the best performance, combining the lowest mean runtimes ($0.5$--$0.9$ seconds) with failure rates below $2\%$. The picture reverses on the two predominantly large-scale families. On the synthetic benchmark, Warm-IP solves every instance, and its mean runtime of $44$ seconds is less than half that of any competitor (roughly $100$--$200$ seconds). On IMRT, Warm-IP is again the fastest on average ($47$ seconds) with no failures; \textit{MOSEK}'s mean is close ($55$ seconds) but it fails on $23\%$ of the instances, and the fastest fully reliable competitor, \textit{Gurobi}, is about five times slower (roughly $240$ seconds).
		
		To separate the effect of problem size from benchmark type, Figure~\ref{overall_box} aggregates all instances into two categories: small/medium instances, defined by $m+n \le 100{,}000$, and large instances, defined by $m+n>100{,}000$. This analysis confirms the trends of Figure~\ref{benchmark_results} and adds the solution-quality dimension. For small- and medium-scale problems, \textit{Clarabel} offers the best combination of speed, accuracy, and reliability (mean $0.6$ seconds, $0.5\%$ failures), with \textit{PIQP} and \textit{Gurobi} also fast, accurate, and rarely failing; Warm-IP's mean runtime is within a couple of seconds of these leaders, but it fails on $10\%$ of these instances (concentrated in the MM set). For large-scale problems, Warm-IP is the clear winner on all three axes: it is the fastest on average ($50$ seconds, versus $65$ for MOSEK, which fails on $26\%$ of the instances, and $200$ for Gurobi), fails on only $2.6\%$ of the instances (two large MM cases), and returns accurate solutions. OSQP, as a first-order method, is designed for moderate-accuracy solutions; at the tight tolerance of $10^{-6}$ used here its failure rates are high, and its results should be read in that light. These results suggest that Warm-IP is particularly effective in the regime for which it was designed: large-scale QPs where first-order methods may struggle to reach high accuracy and conventional IP methods can be computationally expensive without an effective warm start.
		
		\begin{figure}[htbp]
			\centering
			\includegraphics{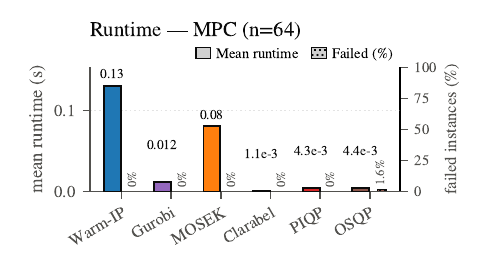}\hfill
			\includegraphics{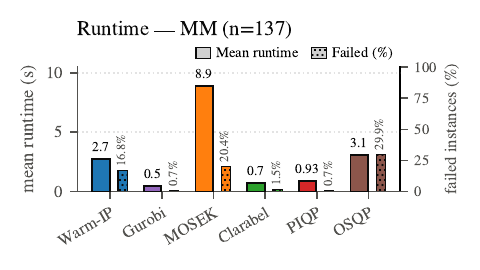}\\[2pt]
			\includegraphics{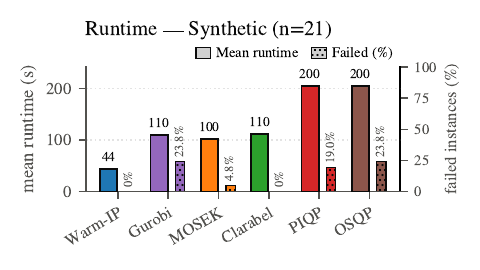}\hfill
			\includegraphics{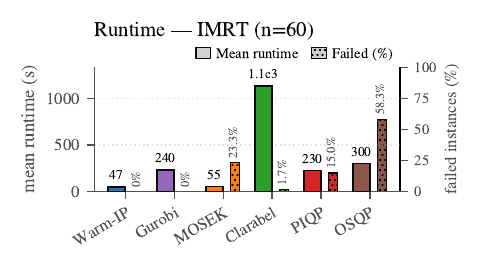}
			\caption{
				Mean runtime per benchmark class (instance counts shown in the panel titles). Solid bars show each solver's mean runtime over its returned results (value labeled); dotted bars give the percentage of failed instances, measured on the right axis. The full runtime distributions are reported in Appendix~\ref{app:runtime_dists}.
			}
			\label{benchmark_results}
		\end{figure}
		
		\begin{figure}[htbp]
			\centering
			\includegraphics{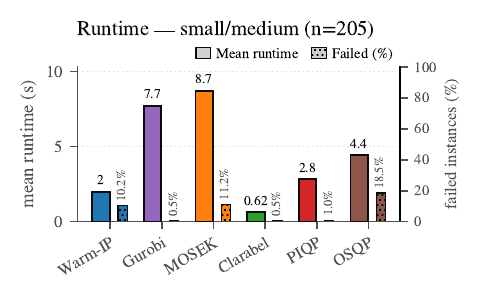}\hfill
			\includegraphics{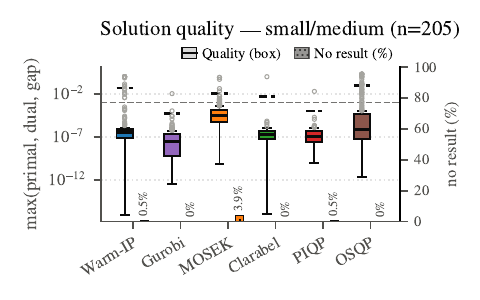}\\[2pt]
			\includegraphics{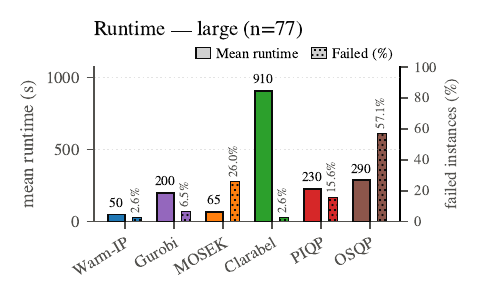}\hfill
			\includegraphics{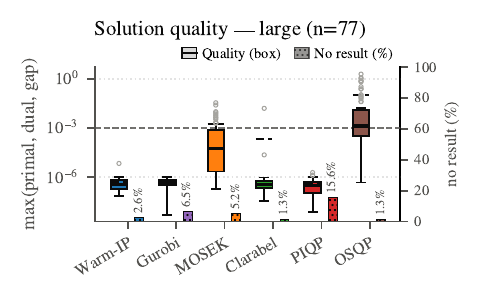}
			\caption{
				Aggregate mean runtime (left, semantics as in Figure~\ref{benchmark_results}) and solution-quality distributions (right) for small/medium (top) and large (bottom) instances (instance counts shown in the panel titles). Quality boxes cover each solver's returned results, with dotted bars giving the percentage of instances for which no result was returned; open circles mark solutions beyond the whiskers. The dashed horizontal line marks the failure threshold $\mathrm{KKT}_{\max}=10^{-3}$. The full runtime distributions are reported in Appendix~\ref{app:runtime_dists}.
			}
			\label{overall_box}
		\end{figure}
		In addition to the mean-runtime comparisons, we evaluate solvers using the \textit{performance profile} introduced by \citet{dolan2002benchmarking}, a standard tool for benchmarking optimization software. Performance profiles provide a normalized comparison across heterogeneous problem instances by measuring how close each solver is to the best-performing solver on each problem.
		
		Let $t_{p,s}$ denote the computational time required by solver $s\in\mathcal{S}$ to solve problem $p\in\mathcal{P}$. For each problem, we define the performance ratio
		\begin{equation}
			r_{p,s}
			=
			\frac{t_{p,s}}{\min_{s'\in\mathcal{S}} t_{p,s'}},
			\qquad r_{p,s}\ge 1.
			\label{eq:performance_ratio}
		\end{equation}
		Thus, $r_{p,s}=1$ indicates that solver $s$ is the fastest solver on problem $p$, while $r_{p,s}=2$ indicates that it is twice as slow as the fastest solver for that problem. If a solver fails on a problem, we assign it an infinite ratio so that the failure is reflected in the profile without removing the instance from the comparison. The performance profile of solver $s$ is defined as
		\begin{equation}
			\rho_s(\tau)
			=
			\frac{1}{|\mathcal{P}|}
			\left|
			\left\{
			p\in\mathcal{P}: r_{p,s}\le \tau
			\right\}
			\right|,
			\qquad \tau\ge 1.
			\label{eq:performance_profile}
		\end{equation}
		Therefore, $\rho_s(\tau)$ represents the fraction of problems for which solver $s$ solves the problem within a factor $\tau$ of the best runtime achieved by any solver on that problem. In particular, $\rho_s(1)$ is the fraction of problems on which solver $s$ is the fastest. Similarly, $\rho_s(1.2)$ is the fraction of problems for which solver $s$ is at most $20\%$ slower than the fastest solver, and $\rho_s(2)$ is the fraction of problems for which it is within a factor of two of the best runtime. Hence, the behavior of the curve near $\tau=1$ reflects how often a solver is the fastest or nearly fastest, while the height of the curve for larger $\tau$ reflects robustness and consistency across the benchmark set.
		
		Figure~\ref{overall_performance} shows the performance profiles for the small/medium and large problem categories. Failed solves are assigned an infinite ratio, so failures do not drop out of the comparison; instead they cap a solver's curve below one. For small- and medium-scale instances, Clarabel has the steepest rise near $\tau=1$ and the highest curve throughout, consistent with Figures~\ref{benchmark_results} and~\ref{overall_box}. OSQP rises almost as quickly: its low per-iteration cost makes it the fastest or nearly fastest on many of the instances it does solve, while its plateau reflects its failure rate. PIQP and Gurobi reach comparable fractions at larger $\tau$. Warm-IP is weakest in this regime, which is expected because its hybrid ADMM--IP design, together with the current Python implementation, introduces overhead that is less beneficial for smaller problems.
		
		In contrast, the large-scale results show the opposite trend. Warm-IP has the highest value near $\tau=1$, meaning that it is the fastest solver on the largest fraction of large instances. Its profile also rises rapidly and remains above the other solvers over most of the plotted range, indicating that Warm-IP is consistently close to the best observed runtime across the large-scale benchmark. MOSEK and Gurobi are the strongest competing solvers, but their profiles reach comparable fractions only at larger values of $\tau$, reflecting higher runtimes relative to Warm-IP. OSQP is substantially less favorable in this regime, while Clarabel and PIQP are slower but more robust.
		
		Overall, the performance profiles reinforce the conclusions from Figures~\ref{benchmark_results} and~\ref{overall_box}: Warm-IP is most advantageous for large-scale QPs, where the path-following ADMM warm start enables the IP phase to achieve high accuracy with substantially reduced computational time. The warm start also shortens the IP phase itself. As shown in Figure~\ref{fig:ip_iters}, Warm-IP requires substantially fewer IP iterations than the cold-started IP on the large-scale families (on average $17$ versus $27$ on Synthetic and $22$ versus $31$ on IMRT) which translates into mean runtimes that are less than half on Synthetic ($44$ versus roughly $100$ seconds) and about $20\%$ lower on IMRT ($47$ versus $58$ seconds). On the smaller MPC and MM families the improvement is more modest. This indicates that the ADMM phase hands the IP phase an iterate close enough to the central path to save a meaningful share of its iterations, with the payoff concentrated where each IP iteration is expensive.

		\begin{figure}[htbp]
			\centering
			\includegraphics{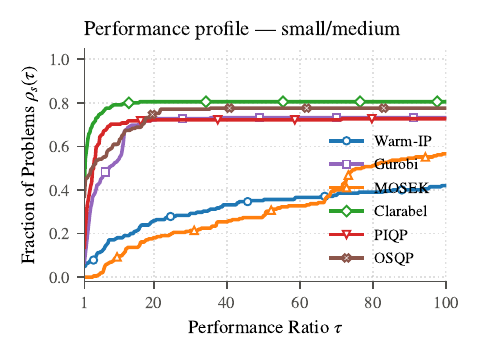}\hfill
			\includegraphics{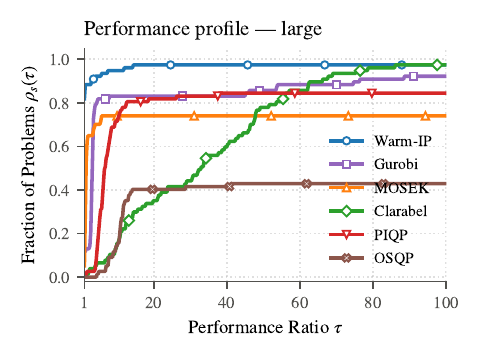}
			\caption{
				Performance profiles $\rho_s(\tau)$ (the fraction of problems solved within a factor $\tau$ of the fastest solver) on small/medium (left) and large (right) instances. Failed solves receive an infinite ratio; only solvers run on every instance in a category are included.
			}
			\label{overall_performance}
		\end{figure}
		
		\begin{figure}[htbp]
			\centering
			\includegraphics{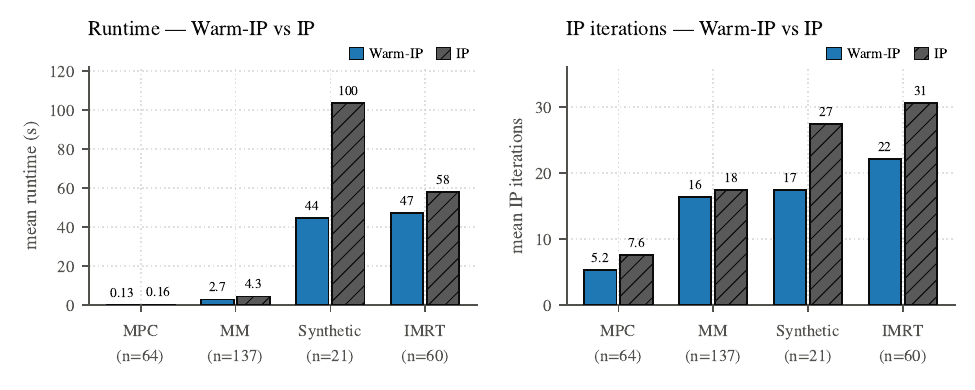}
			\caption{
				Warm-IP versus the cold-started IP per benchmark family: mean runtime (left) and mean IP iterations (right) over each method's returned results (values labeled). The full distributions are reported in Appendix~\ref{app:runtime_dists}.
			}
			\label{fig:ip_iters}
		\end{figure}
		
		\section{Conclusion and Future Research}\label{conclusion}
		In this paper, we proposed Warm-IP, a hybrid framework that combines the scalability of first-order methods with the high-accuracy convergence of primal--dual interior-point methods: a path-following variant of ADMM whose iterates (with an appropriate variable update order) satisfy the perturbed complementarity relation $SZe=\mu e$, and therefore hand the IP phase a warm start that is approximately feasible and aligned with the central path. Across a diverse benchmark suite of MM, MPC, synthetic, and real-patient radiotherapy QPs, Warm-IP substantially outperforms existing commercial and open-source solvers on large-scale instances; on the radiotherapy benchmark it solves every instance and is the fastest on average, while the strongest competitors either fail on nearly a quarter of the cases (MOSEK) or are several times slower (Gurobi).
		
		There are several directions for future research. First, the current framework is developed for convex quadratic programming. A natural and important extension is to general nonlinear programming problems, where both the objective and constraints may be nonlinear. Extending the path-following ADMM warm-start strategy to nonlinear programming would require handling nonlinear residuals, updating Hessian or quasi-Newton approximations, and potentially nonconvexity, but could substantially broaden the applicability of the method. Second, the current implementation is written primarily in Python to support rapid development and reproducibility. Future work will focus on developing a high-performance implementation in a compiled language and further exploiting sparse linear algebra, parallelism, and GPU acceleration. Finally, additional theoretical analysis is needed to characterize the convergence behavior of the combined ADMM--IP framework. In particular, it would be valuable to establish conditions under which the ADMM phase provides a provably effective central-path warm start and places the subsequent interior-point iterations within, or close to, the fast local convergence region.
		


	\section*{Acknowledgments}
This research was partially supported by NIH Cancer Center Support Grant/Core Grant (P30 CA008748).

\bibliographystyle{informs2014}
\bibliography{bibliography}

\appendix

		\section{Dual and Optimality Conditions of the Log-Barrier-Modified Problem}
	\phantomsection
	\label{sec:app:dual_derivation}
	
	We derive the Lagrange dual and the optimality conditions of the log-barrier-modified problem \eqref{eq:org_prob_modified}, whose primal form is
	\begin{equation}
		\min_{x \in \mathbb{R}^n,\; s>0} \quad \frac{1}{2}x^\top Qx + q^\top x - \mu \sum_{i=1}^{m}\log(s_i)
		\quad \text{s.t.} \quad Gx + s = h, \qquad Ax = b.
		\label{eq:app_barrier_primal}
	\end{equation}
	Let $z\in\mathbb{R}^m$ and $v\in\mathbb{R}^p$ denote the Lagrange multipliers associated with the coupling constraint $Gx+s=h$ and the equality constraint $Ax=b$, respectively. The Lagrangian is
	\begin{equation}
		\mathcal{L}(x,s,z,v) = \frac{1}{2}x^\top Qx + q^\top x - \mu\sum_{i=1}^{m} \log(s_i) + z^\top(Gx+s-h) + v^\top(Ax-b),
		\label{eq:app_barrier_lagrangian}
	\end{equation}
	and the dual function is $g(z,v)=\inf_{x,\, s>0}\mathcal{L}(x,s,z,v)$. Because the Lagrangian separates into a term involving only $x$ and a term involving only $s$, the two minimizations can be carried out independently.
	
	\emph{Minimization over $x$.} The Lagrangian is convex quadratic in $x$, so its minimizer satisfies the stationarity condition
	\begin{equation}
		Qx + G^\top z + A^\top v = -q,
		\label{eq:app_stationarity}
	\end{equation}
	which is identical to the stationarity condition of the original QP \eqref{eq:org_prob}. Substituting $\left(q+G^\top z+A^\top v\right)^\top x = -x^\top Qx$ into the $x$-part of the Lagrangian reduces it to $-\frac{1}{2}x^\top Qx$, exactly as in the standard QP dual derivation \citep{boyd2004convex}.
	
	\emph{Minimization over $s>0$.} The $s$-part of the Lagrangian is separable across components, $\sum_{i=1}^{m} \left(z_i s_i - \mu\log s_i\right)$. If $z_i \le 0$ for some $i$, this expression is unbounded below as $s_i\to\infty$, so the dual function is $-\infty$; dual feasibility therefore requires $z>0$. For $z_i>0$, setting the derivative to zero gives
	\begin{equation}
		z_i - \frac{\mu}{s_i} = 0 \qquad\Longleftrightarrow\qquad s_i = \frac{\mu}{z_i},
		\label{eq:app_s_stationarity}
	\end{equation}
	and the minimum value of the $i$-th term is $\mu - \mu\log(\mu/z_i) = \mu\log(z_i) + \mu(1-\log\mu)$.
	
	Collecting the two parts, the dual function for $z>0$ is
	\begin{equation}
		g(z,v) = -\frac{1}{2}x^\top Qx - h^\top z - b^\top v + \mu\sum_{i=1}^{m} \log(z_i) + m\mu(1-\log\mu),
		\label{eq:app_dual_function}
	\end{equation}
	with $x$ determined by the stationarity constraint \eqref{eq:app_stationarity}. Dropping the additive constant $m\mu(1-\log\mu)$, which does not affect the maximizer, yields the dual problem stated in \eqref{eq:org_prob_modified}.
	
	\emph{Optimality conditions.} Problem \eqref{eq:app_barrier_primal} is convex, and under the standing assumptions of Section~\ref{sec:methodology} strong duality holds. Its optimality conditions consist of the stationarity condition \eqref{eq:app_stationarity}; primal feasibility $Gx+s=h$ with $s>0$ and $Ax=b$; dual feasibility $z>0$; and the $s$-stationarity condition \eqref{eq:app_s_stationarity}, which in matrix form reads
	\begin{equation}
		SZe = \mu e.
		\label{eq:app_perturbed_complementarity}
	\end{equation}
	Comparing with the KKT conditions \eqref{eq:KKT_compact} of the original QP, the stationarity and primal and dual feasibility equations are unchanged, while the complementarity condition $SZe=0$ is replaced by the perturbed condition $SZe=\mu e$ (and the nonnegativity conditions $s\ge 0$, $z\ge 0$ are sharpened to strict positivity). This substantiates the claim made after \eqref{eq:org_prob_modified}.
	
	\emph{Duality gap at an arbitrary iterate.} Finally, we record the exact expression for the duality gap used in Section~\ref{subsec:adaptive_rho}. For any point $(x,s,z,v)$, let $p=\frac{1}{2}x^\top Qx+q^\top x$ and $d=-\frac{1}{2}x^\top Qx-h^\top z-b^\top v$ denote the primal and dual objective values of the original QP \eqref{eq:org_prob} evaluated at that point, as in Remark~\ref{rem:stopping_criteria}, so that
	\begin{equation}
		p - d = x^\top Qx + q^\top x + h^\top z + b^\top v.
		\label{eq:app_gap_raw}
	\end{equation}
	Substituting $q^\top x = x^\top\left(Qx+G^\top z+A^\top v+q\right) - x^\top Qx - z^\top Gx - v^\top Ax$, together with $h^\top z = z^\top(Gx+s) - z^\top(Gx+s-h)$ and $b^\top v = v^\top Ax - v^\top(Ax-b)$, gives the identity
	\begin{equation}
		p - d = s^\top z + x^\top\left(Qx+G^\top z+A^\top v+q\right) - z^\top\left(Gx+s-h\right) - v^\top\left(Ax-b\right).
		\label{eq:app_gap_identity}
	\end{equation}
	The three parenthesized vectors are precisely the stationarity and primal feasibility residuals of \eqref{eq:KKT_residual}. In particular, when these residuals vanish the duality gap equals the complementarity term $s^\top z$; along the path-following ADMM iterates, where $s^\top z = m\mu$ by Theorem~\ref{thm:admm_complementarity}, the gap therefore approaches $m\mu$ as the residuals decrease.
		

		\section{Reduction of the KKT Linear System}
		\phantomsection
		\label{sec:reduced_kkt}
		
		Each IP iteration requires solving the Newton system
		\begin{equation}
			\begin{bmatrix}
				Q & A^\top & G^\top & 0 \\
				G & 0      & 0      & I \\
				A & 0      & 0      & 0 \\
				0 & 0      & S      & Z
			\end{bmatrix}
			\begin{bmatrix}
				\Delta x \\ 
				\Delta v \\ 
				\Delta z \\ 
				\Delta s
			\end{bmatrix}
			=
			-
			\begin{bmatrix}
				r_{\mathrm d} \\
				r_{\mathrm p}^{\mathrm{ineq}} \\
				r_{\mathrm p}^{\mathrm{eq}} \\
				r_{\mathrm c}
			\end{bmatrix}.
			\label{eq:full_kkt_system}
		\end{equation}
		Although this system is sparse, it contains both the slack direction $\Delta s$ and the inequality-dual direction $\Delta z$. These variables can be eliminated to obtain a smaller linear system in only $\Delta x$ and $\Delta v$.
		
		From the second block row of \eqref{eq:full_kkt_system}, we have
		\begin{equation}
			G\Delta x + \Delta s = -r_{\mathrm p}^{\mathrm{ineq}},
		\end{equation}
		and therefore
		\begin{equation}
			\Delta s = -r_{\mathrm p}^{\mathrm{ineq}} - G\Delta x.
			\label{eq:delta_s_elimination}
		\end{equation}
		Similarly, the fourth block row gives
		\begin{equation}
			S\Delta z + Z\Delta s = -r_{\mathrm c}.
		\end{equation}
		Substituting \eqref{eq:delta_s_elimination} into this equation yields
		\begin{equation}
			S\Delta z - ZG\Delta x - Zr_{\mathrm p}^{\mathrm{ineq}} = -r_{\mathrm c}.
		\end{equation}
		Assuming $S$ is nonsingular, which holds for interior-point iterates since $s>0$, we obtain
		\begin{equation}
			\Delta z
			=
			S^{-1}ZG\Delta x
			+
			S^{-1}Zr_{\mathrm p}^{\mathrm{ineq}}
			-
			S^{-1}r_{\mathrm c}.
			\label{eq:delta_z_elimination}
		\end{equation}
		
		Substituting \eqref{eq:delta_z_elimination} into the first block row of \eqref{eq:full_kkt_system},
		\begin{equation}
			Q\Delta x + A^\top \Delta v + G^\top \Delta z = -r_{\mathrm d},
		\end{equation}
		gives
		\begin{equation}
			\left(Q + G^\top S^{-1}ZG\right)\Delta x
			+
			A^\top \Delta v
			=
			-r_{\mathrm d}
			-
			G^\top S^{-1}Zr_{\mathrm p}^{\mathrm{ineq}}
			+
			G^\top S^{-1}r_{\mathrm c}.
		\end{equation}
		The third block row remains
		\begin{equation}
			A\Delta x = -r_{\mathrm p}^{\mathrm{eq}}.
		\end{equation}
		
		Thus, defining
		\begin{equation}
			K := Q + G^\top S^{-1}ZG
			\label{eq:K_definition}
		\end{equation}
		and
		\begin{equation}
			r_{\mathrm{rhs}}
			:=
			-r_{\mathrm d}
			-
			G^\top S^{-1}Zr_{\mathrm p}^{\mathrm{ineq}}
			+
			G^\top S^{-1}r_{\mathrm c},
			\label{eq:reduced_rhs_definition}
		\end{equation}
		we obtain the reduced KKT system
		\begin{equation}
			\begin{bmatrix}
				K & A^\top \\
				A & 0
			\end{bmatrix}
			\begin{bmatrix}
				\Delta x \\
				\Delta v
			\end{bmatrix}
			=
			\begin{bmatrix}
				r_{\mathrm{rhs}} \\
				-r_{\mathrm p}^{\mathrm{eq}}
			\end{bmatrix}.
			\label{eq:reduced_kkt_system}
		\end{equation}
		
		Once $\Delta x$ and $\Delta v$ are computed from \eqref{eq:reduced_kkt_system}, the eliminated directions are recovered using
		\begin{equation}
			\Delta s = -r_{\mathrm p}^{\mathrm{ineq}} - G\Delta x
			\label{eq:recover_delta_s}
		\end{equation}
		and
		\begin{equation}
			\Delta z
			=
			S^{-1}ZG\Delta x
			+
			S^{-1}Zr_{\mathrm p}^{\mathrm{ineq}}
			-
			S^{-1}r_{\mathrm c}.
			\label{eq:recover_delta_z}
		\end{equation}
		This reduction decreases the dimension of the linear system and exposes the structured matrix $K$, which plays a central role in the computational efficiency of the IP method.


		\section{Primal--Dual Interior-Point Initialization}
		\phantomsection
		\label{sec:app_initialization}
		When IP is used without an ADMM warm start, an initial interior point must be constructed. We use a standard heuristic initialization strategy based on first relaxing the inequality constraint $Gx\le h$ and penalizing its violation through a quadratic regularization term. Specifically, we solve the equality-constrained problem
		
		\begin{equation}
			\min_{x} \quad 
			\frac{1}{2}x^\top Qx + q^\top x + \frac{1}{2}\|Gx-h\|_2^2
			\quad \text{s.t.} \quad Ax=b .
			\label{eq:pdip_init_regularized_problem}
		\end{equation}
		This problem provides an initial primal point that satisfies the equality constraints while also encouraging approximate satisfaction of the inequality constraints. Since \eqref{eq:pdip_init_regularized_problem} has only equality constraints, its solution can be obtained by solving the KKT system
		\begin{equation}
			\begin{bmatrix}
				K_0 & A^\top \\
				A & 0
			\end{bmatrix}
			\begin{bmatrix}
				x \\
				v
			\end{bmatrix}
			=
			\begin{bmatrix}
				G^\top h - q \\
				b
			\end{bmatrix},
			\label{eq:pdip_init_kkt}
		\end{equation}
		where $K_0=Q+G^\top G$ (distinct from the matrix $K$ of \eqref{eq:K_D_definition}). Solving \eqref{eq:pdip_init_kkt} gives initial estimates for the primal variable $x$ and the equality multiplier $v$.
		
		The remaining variables are the slack variable $s$ and the inequality multiplier $z$. Ignoring positivity requirements, a natural estimate for the slack variable is
		\begin{equation}
			s = h-Gx,
			\label{eq:init_s_raw}
		\end{equation}
		which is the residual of the inequality constraint. Similarly, the stationarity condition of \eqref{eq:pdip_init_kkt} can be written as
		\begin{equation}
			Qx + G^\top(Gx-h) + A^\top v + q = 0.
		\end{equation}
		Comparing this expression with the dual feasibility condition of the original primal--dual KKT system,
		\begin{equation}
			Qx + G^\top z + A^\top v + q = 0,
		\end{equation}
		suggests the initial estimate
		\begin{equation}
			z = Gx-h.
			\label{eq:init_z_raw}
		\end{equation}
		Thus, the regularized equality-constrained problem provides not only initial estimates for $x$ and $v$, but also natural residual-based estimates for $s$ and $z$.
		
		Because IP requires strictly positive slack variables and inequality multipliers, the raw estimates in \eqref{eq:init_s_raw}--\eqref{eq:init_z_raw} are shifted if necessary. Specifically, we set
		\begin{equation}
			s^{(0)}
			=
			s
			+
			\max\{0,\,1-\min_i s_i\}\,e,
			\label{eq:initial_slack_shifted}
		\end{equation}
		and
		\begin{equation}
			z^{(0)}
			=
			z
			+
			\max\{0,\,1-\min_i z_i\}\,e,
			\label{eq:initial_dual_shifted}
		\end{equation}
		where $e$ denotes the vector of ones. This uniform shift ensures that all components of $s^{(0)}$ and $z^{(0)}$ are at least one, and therefore strictly positive. The resulting point
		\[
		(x^{(0)},v^{(0)},s^{(0)},z^{(0)})
		\]
		is not guaranteed to satisfy all KKT conditions, but it provides a numerically stable interior starting point for the IP iterations.

		\section{Step Size Selection}
		\phantomsection
		\label{app:step_size}
		
		To keep the iterates strictly inside the positive orthant, we choose the step size using a fraction-to-the-boundary rule. Given the Newton directions $\Delta s$ and $\Delta z$, the primal and dual step sizes $\alpha_s,\alpha_z \in (0,1]$ are selected so that
		\[
		s^{(k)}+\alpha_s \Delta s \ge (1-\tau)s^{(k)}, 
		\qquad
		z^{(k)}+\alpha_z \Delta z \ge (1-\tau)z^{(k)},
		\]
		where the inequalities are interpreted componentwise and $\tau \in (0,1)$ is chosen close to one. In our implementation, we use $\tau=0.995$. These conditions lead to the closed-form choices
		\begin{equation}
			\alpha_s
			=
			\min\left\{
			1,\;
			\min_{i:\Delta s_i<0}\frac{\tau s_i^{(k)}}{-\Delta s_i}
			\right\},
			\qquad
			\alpha_z
			=
			\min\left\{
			1,\;
			\min_{i:\Delta z_i<0}\frac{\tau z_i^{(k)}}{-\Delta z_i}
			\right\}.
			\label{eq:fraction_to_boundary_alpha_pd}
		\end{equation}
		If all components of $\Delta s$ or $\Delta z$ are nonnegative, the corresponding minimum is omitted. The final step size is then taken as $\alpha=\min\{\alpha_s,\alpha_z\}.$ This choice guarantees $s^{(k+1)}>0$ and $z^{(k+1)}>0$ while allowing the algorithm to take nearly full Newton steps whenever possible.
		
		\section{Mehrotra Predictor--Corrector Step}
		\phantomsection
		\label{app:mehrotra}
		
		A basic path-following IP method reduces the barrier parameter $\mu$ gradually, for example by setting the target complementarity to $\sigma\mu$ with a fixed $\sigma \in (0,1)$. Mehrotra's predictor--corrector technique provides a more adaptive and effective strategy for choosing this reduction at each iteration \citep{mehrotra1992implementation}. The method consists of two main steps. First, in the \textit{predictor} step, an affine-scaling direction is computed by solving the Newton system \eqref{eq:kkt_system} with the centering term removed, i.e., with target complementarity $\mu=0$. Let
		\[
		(\Delta x^{\mathrm{aff}},\Delta v^{\mathrm{aff}},
		\Delta z^{\mathrm{aff}},\Delta s^{\mathrm{aff}})
		\]
		denote this affine direction. This direction predicts how much progress the algorithm can make toward reducing complementarity if it moves aggressively toward the boundary. Using the largest admissible affine step lengths, $\alpha_s^{\mathrm{aff}}$ and $\alpha_z^{\mathrm{aff}}$, the predicted complementarity is estimated as
		\begin{equation}
			\mu_{\mathrm{aff}}
			=
			\frac{
				\left(s+\alpha_s^{\mathrm{aff}}\Delta s^{\mathrm{aff}}\right)^\top
				\left(z+\alpha_z^{\mathrm{aff}}\Delta z^{\mathrm{aff}}\right)
			}{m},
			\qquad
			\mu=\frac{s^\top z}{m}.
			\label{eq:mu_aff}
		\end{equation}
		The centering parameter is then chosen adaptively as
		\begin{equation}
			\sigma
			=
			\left(\frac{\mu_{\mathrm{aff}}}{\mu}\right)^3,
			\label{eq:mehrotra_sigma}
		\end{equation}
		which is typically clipped to the interval $[0,1]$ in implementations. Thus, if the affine step is predicted to substantially reduce complementarity, then $\sigma$ becomes small and the method emphasizes progress toward optimality. Conversely, if the affine step is less effective, then $\sigma$ remains larger and the method places more emphasis on centering.
		
		Second, in the \textit{corrector} step, the Newton system is solved again with a modified complementarity residual:
		\begin{equation}
			\label{eq:corr_compl_res}
			r_{\mathrm c}^{\mathrm{pc}}=\underbrace{SZe}_{\text {prediction }} +\underbrace{\Delta S^{\mathrm{aff}}\Delta Z^{\mathrm{aff}}e}_{\text {correction }}-\underbrace{\sigma \mu e}_{\text {centering }}    
		\end{equation}
		where $\Delta S^{\mathrm{aff}}=\operatorname{diag}(\Delta s^{\mathrm{aff}})$ and
		$\Delta Z^{\mathrm{aff}}=\operatorname{diag}(\Delta z^{\mathrm{aff}})$. The term $-\sigma\mu e$ pulls the iterate back toward the central path, while the second-order correction term 
		$\Delta S^{\mathrm{aff}}\Delta Z^{\mathrm{aff}}e$ compensates for the nonlinear change in the complementarity product $SZe$ along the affine direction. Intuitively, the predictor step estimates how far the method can move toward optimality, and the corrector step adjusts this direction to maintain a favorable balance between reducing complementarity and staying sufficiently close to the central path. This predictor--corrector strategy typically permits longer and more effective steps than a purely path-following method, thereby reducing the number of IP iterations required in practice.
		
		\section{Primal--Dual Interior-Point Pseudocode}
		\phantomsection
		\label{app:pdip_alg}
		
		Algorithm~\ref{alg:pdip} assembles the components of Section~\ref{sec:pdip}---the Newton system \eqref{eq:kkt_system}, the initialization of Appendix~\ref{sec:app_initialization}, the fraction-to-the-boundary rule of Appendix~\ref{app:step_size}, and the predictor--corrector strategy of Appendix~\ref{app:mehrotra}---into the complete primal--dual interior-point method.
		
		\algrenewcommand\algorithmiccomment[1]{\hfill{\footnotesize$\triangleright$ #1}}
		
		\algblockdefx[Mehrotra]{MehrotraPredictor}{EndMehrotraPredictor}
		[1]{\textbf{Mehrotra predictor--corrector:} #1}
		{\textbf{end Mehrotra predictor--corrector}}
		
		\begin{algorithm}[htbp]
			\small
			\caption{Primal--Dual Interior-Point Method for Convex QP}
			\label{alg:pdip}
			\begin{algorithmic}[1]
				\Require QP data $(Q,q,G,h,A,b)$, tolerance $\varepsilon>0$, fraction-to-boundary parameter $\tau\in(0,1)$
				\Ensure Approximate primal--dual solution $(x,v,z,s)$
				
				\State Initialize $(x^{(0)},v^{(0)},z^{(0)},s^{(0)})$ using Appendix~\ref{sec:app_initialization}, and set $k \gets 0$
				\Statex {\footnotesize$\triangleright$ cache the symbolic factorization for reuse in later KKT solves; see Remark~\ref{rem:sparsity_reuse}.}

				
				\While{stopping criteria in Remark~\ref{rem:stopping_criteria} are not satisfied}
				\MehrotraPredictor{}
				\State Set $\mu \gets 0$ and compute the KKT residual 
				$F(x^{(k)},v^{(k)},z^{(k)},s^{(k)})$ using \eqref{eq:perturbed_kkt_residual}
				\State Solve KKT system \eqref{eq:kkt_system} for affine step $(\Delta x^{\text{aff}},\Delta v^{\text{aff}},\Delta z^{\text{aff}},\Delta s^{\text{aff}})$
				\Statex \hspace{\algorithmicindent}{\footnotesize$\triangleright$ (1) better to solve the reduced KKT system \eqref{eq:reduced_kkt_remark}; see Remark~\ref{rem:reduced_kkt}, (2) reuse the symbolic factorization.}  
				\State Compute $\mu_{\mathrm{aff}}$ and $\sigma$ using \eqref{eq:mu_aff}--\eqref{eq:mehrotra_sigma}
				\State Compute the corrected complementarity residual using \eqref{eq:corr_compl_res}
				\EndMehrotraPredictor

				\State Resolve the KKT system \eqref{eq:kkt_system} for corrected step $(\Delta x,\Delta v,\Delta z,\Delta s)$
				\Statex \hspace{\algorithmicindent} {\footnotesize$\triangleright$ solve the reduced KKT system and reuse the prior factorization.}
				\State Compute the step size $\alpha$ using the fraction-to-the-boundary rule in \eqref{eq:fraction_to_boundary_alpha_pd}
				\State Update $(x,v,z,s)$ using \eqref{eq:pdip_update}
				\State $k \gets k+1$
				
				\EndWhile
				
				\State \Return $(x^{(k)},v^{(k)},z^{(k)},s^{(k)})$
				
			\end{algorithmic}
		\end{algorithm}
		
		\section{\texorpdfstring{Adaptive $\rho$ Update Strategy}{Adaptive rho Update Strategy}}
		\phantomsection
		\label{app:adaptive_rho}
		
		The ADMM penalty parameter $\rho$ trades off primal against dual progress: a large $\rho$ enforces the coupling constraint $Gx+s=h$ aggressively and drives down the primal residual, while a small $\rho$ favors the dual residual. Following the residual-balancing strategy of \citet{he2000alternating}, as used in OSQP \citep{stellato2020osqp}, we rescale $\rho$ by
		\begin{equation}
			\rho_{\mathrm{scale}}
			=
			\left(
			\frac{r_{\mathrm p}}{r_{\mathrm d}+\delta}
			\right)^{1/2},
			\label{eq:rho_scale}
		\end{equation}
		where $r_{\mathrm p}$ and $r_{\mathrm d}$ are the ADMM primal and dual residual norms, $\delta>0$ avoids division by zero, and the square-root exponent dampens the update. Because changing $\rho$ modifies the $x$-update matrix in \eqref{eq:admm_xv_update} and thus requires a new numerical factorization, $\rho$ is updated only when the imbalance is large: for a threshold $\tau_\rho>1$ (we use $\tau_\rho=2$),
		\begin{equation}
			\rho \leftarrow \rho\,\rho_{\mathrm{scale}}
			\quad
			\text{only if}
			\quad
			\rho_{\mathrm{scale}}>\tau_\rho
			\quad \text{or} \quad
			\rho_{\mathrm{scale}}<\frac{1}{\tau_\rho}.
			\label{eq:rho_update}
		\end{equation}
		Although Theorem~\ref{thm:admm_complementarity} is stated for fixed $\rho$, its proof only requires the $s$- and $z$-updates within one iteration to share the same $\rho$; since the rule modifies $\rho$ only at the end of an iteration, the relation $SZe=\mu e$ is preserved.
		
		\section{Runtime and IP-Iteration Distributions}
		\phantomsection
		\label{app:runtime_dists}
		
		Figures~\ref{fig:box_family}--\ref{fig:box_vsip} report the full distributions summarized by the mean bars of Figures~\ref{benchmark_results}, \ref{overall_box}, and~\ref{fig:ip_iters}. Boxes cover each solver's returned results (solutions exceeding the failure tolerance included); whiskers extend to the most extreme values within $1.5\times$ the interquartile range of the box, and open circles mark solves beyond the whiskers. In Figures~\ref{fig:box_family} and~\ref{fig:box_pooled}, dotted bars give the percentage of failed instances (no result returned, or $\mathrm{KKT}_{\max}>10^{-3}$), measured on the right axis. For readability, the runtime axes are capped near the 95th percentile of the plotted values; a few extreme outliers lie beyond the axis.
		
		\begin{figure}[htbp]
			\centering
			\includegraphics{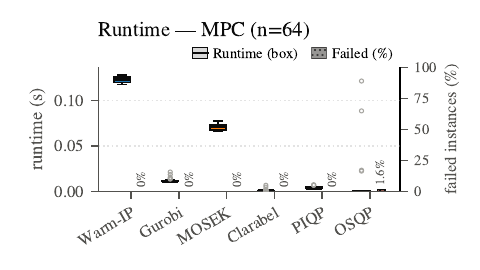}\hfill
			\includegraphics{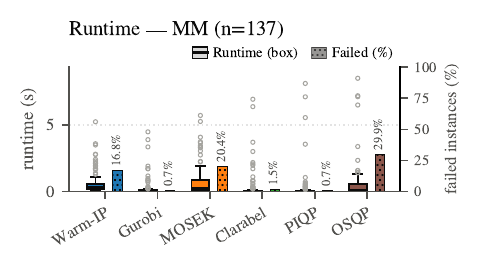}\\[2pt]
			\includegraphics{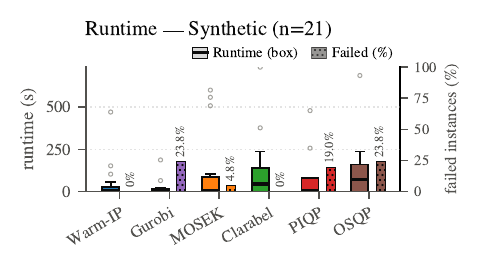}\hfill
			\includegraphics{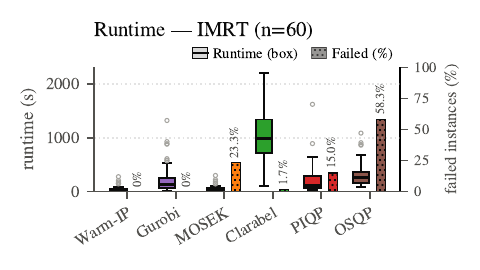}
			\caption{
				Runtime distributions per benchmark class (instance counts shown in the panel titles), summarized by the mean-runtime bars of Figure~\ref{benchmark_results}.
			}
			\label{fig:box_family}
		\end{figure}
		
		\begin{figure}[htbp]
			\centering
			\includegraphics{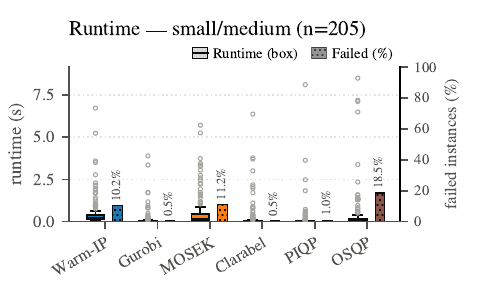}\hfill
			\includegraphics{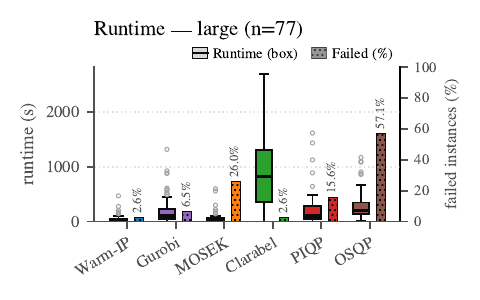}
			\caption{
				Aggregate runtime distributions for small/medium (left) and large (right) instances, summarized by the mean-runtime bars of Figure~\ref{overall_box}.
			}
			\label{fig:box_pooled}
		\end{figure}
		
		\begin{figure}[htbp]
			\centering
			\includegraphics{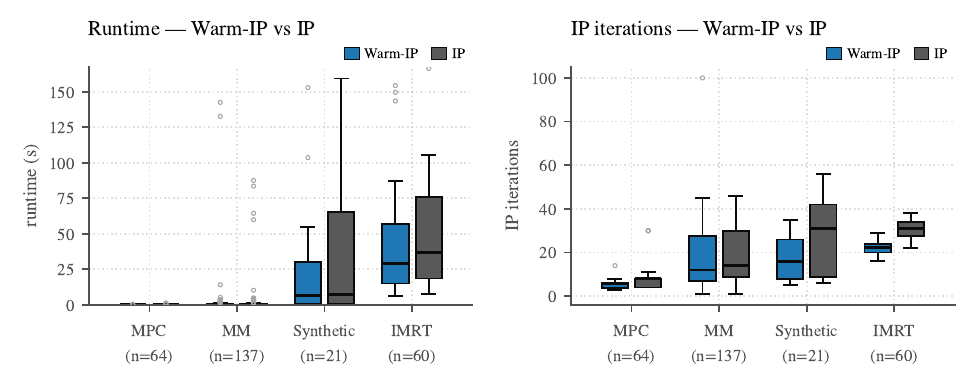}
			\caption{
				Warm-IP versus the cold-started IP per benchmark family: runtime (left) and IP-iteration (right) distributions, summarized by the mean bars of Figure~\ref{fig:ip_iters}.
			}
			\label{fig:box_vsip}
		\end{figure}

\end{document}